\newcommand{\br}[3]{{$#1$}$\lower4pt\hbox{$\tp\atop\raise4pt \hbox{$\scriptscriptstyle{#2}$}$} ${$#3$}}
\newcommand{\tw}[3]{{$#1$}${\,\scriptscriptstyle {#2}}\atop\raise9pt\hbox{$\scriptstyle\tp$} ${$#3$}}
\newcommand{\ttps}[2]{{#1}\raise5pt\hbox{$\lower12pt\hbox{$\scriptstyle\tp$}\atop \lower0pt\hbox{$\tilde\;$}$}\raise4.5pt\hbox{${\scriptstyle{#2}}$}}
\newcommand{\st}[1]{\mbox{${\,\scriptscriptstyle {#1}}\atop\raise5.5pt\hbox{$*$}$}}
\newcommand{\rd}[1]{\mbox{${\,\scriptscriptstyle {#1}}\atop\raise5.5pt\hbox{$\bullet$}$}}
\newcommand{\rt}[1]{\otimes_\chi}
\newcommand{\lt}[1]{\mbox{${\,\scriptscriptstyle {#1}}\atop\raise5.5pt\hbox{$\ltimes$}$}}
\newcommand{\btr}{\raise1.2pt\hbox{$\scriptstyle\blacktriangleright$}\hspace{2pt}}
\newcommand{\btl}{\raise1.2pt\hbox{$\scriptstyle\blacktriangleleft$}\hspace{2pt}}
\newcommand{\lcr}{\raise1.0pt \hbox{${\scriptstyle\rightharpoonup}$}}
\newcommand{\rcr}{\raise1.0pt \hbox{${\scriptstyle\leftharpoonup}$}}
\newcommand{\ttp}{{\lower12pt\hbox{$\tp$}\atop \hbox{$\tilde\;$}}}
\newcommand{\Hg}{\mathfrak{H}}
\newcommand{\Pg}{\mathfrak{P}}
\newcommand{\Bc}{\mathcal{B}}
\newcommand{\A}{\mathcal{A}}
\newcommand{\C}{\mathbb{C}}
\newcommand{\Z}{\mathbb{Z}}
\newcommand{\Sbb}{\mathbb{S}}
\newcommand{\tp}{\otimes}
\newcommand{\V}{V}
\newcommand{\U}{U}
\newcommand{\Fc}{\mathcal{F}}
\newcommand{\ve}{\varepsilon}
\newcommand{\gm}{\gamma}
\newcommand{\dt}{\delta}
\newcommand{\op}{\oplus}
\newcommand{\la}{\lambda}
\newcommand{\tr}{\triangleright}
\newcommand{\tl}{\triangleleft}
\newcommand{\End}{\mathrm{End}}
\newcommand{\Span}{\mathrm{Span}}
\newcommand{\Hom}{\mathrm{Hom}}
\newcommand{\Rm}{\mathrm{R}}
\newcommand{\ad}{\mathrm{ad}}
\newcommand{\La}{\Lambda}
\newcommand{\g}{\mathfrak{g}}
\renewcommand{\b}{\mathfrak{b}}
\renewcommand{\k}{\mathfrak{k}}
\newcommand{\h}{\mathfrak{h}}
\newcommand{\s}{\mathfrak{s}}
\renewcommand{\o}{\mathfrak{o}}
\newcommand{\nn}{\nonumber}
\newcommand{\p}{\mathfrak{p}}
\renewcommand{\l}{\mathfrak{l}}
\newcommand{\al}{\alpha}
\newcommand{\bt}{\beta}
\newcommand{\be}{\begin{eqnarray}}
\newcommand{\ee}{\end{eqnarray}}
\newtheorem{thm}{Theorem}[section]
\newtheorem{propn}[thm]{Proposition}
\newtheorem{lemma}[thm]{Lemma}
\newtheorem{corollary}[thm]{Corollary}
\theoremstyle{definition}
\newcommand{\parag}{\advance\prg by1 {\noindent\bf\thesection.\the\prg\hspace{6pt}}}
\begin{document}
\title{Star-product on complex sphere $\mathbb{S}^{2n}$}
\author{
A. Mudrov\footnote{This study is supported in part by the RFBR grant 15-01-03148.}\vspace{20pt}\\
{\em \small  Dedicated to the memory of Petr Kulish}
\vspace{10pt}\\
\small Department of Mathematics,\\ \small University of Leicester, \\
\small University Road,
LE1 7RH Leicester, UK\\
}

\date{ }

\maketitle
\begin{abstract}
We construct a  $U_q\bigl(\s\o(2n+1)\bigr)$-equivariant local star-product on the complex
sphere $\mathbb{S}^{2n}$ as a non-Levi conjugacy class $SO(2n+1)/SO(2n)$.
\end{abstract}
{\small \underline{Key words}:  quantum groups, quantization, Verma modules.}
\\
{\small \underline{AMS classification codes}: 17B10, 17B37, 53D55.}

\section{Introduction}
In this paper, we incorporate a simple example of homogeneous space with  non-Levi stabilizer into a uniform quantization
scheme for closed conjugacy classes of simple algebraic groups. This approach was developed in 2003 for Levi classes
and utilized the presence of quantum isotropy subgroup in the total quantum group, \cite{DM,EEM}.
 The key distinction of non-Levi classes
is the absence of a natural candidate for such a subgroup because its root basis cannot be made
a part of the total root basis. Still the coordinate ring of the class can be quantized by an operator
realization on certain modules, \cite{AM}. Such a quantization is formulated in terms of generators and relations and is not apparently local.
On the other hand,  a dynamical twist constructed from the Shapovalov
form yields a local version of the star product on Levi classes, \cite{DM,EEM} (see also \cite{AL,EE} for coadjoint orbits with the Kirillov
bracket). It is  natural to extend that approach to all closed conjugacy classes. Such a possibility for $\Sbb^4$ was pointed out
without proof in \cite{M1}. Here we give a solution for all even dimensional spheres.

The original approach to the star product on Levi classes was as follows. Let $\k\subset \g$ be the isotropy Levi subalgebra
of a point $t$ and $\p_\pm\subset\g$ its
parabolic extensions. The point $t$ is associated with a certain weight $\la\in \h^*$ and a pair of modules
$M_\la$, $N_\la$ of, respectively, highest and lowest weights $\la$ and $-\la$. There is a unique $U_q(\g)$-invariant  form
$M_\la\tp N_\la\to \C$, which is non-degenerate if and only if the modules are irreducible. In that case, there
 exists the inverse form $\C\to N_\la\tp M_\la$ and its lift $1\mapsto  \Fc\in U_q(\p_+)\tp U_q(\p_-)$.
The element $\Fc$ gives rise to a "bidifferential" operator via the left co-regular action on the Hopf dual $\A=U_q^*(\g)$.
With this operator, the multiplication in  $\A$ is twisted to a non-associative operation  invariant under the right
co-regular action of $U_q(\g)$. The key observation is  that the new multiplication becomes associative when restricted to the subspace
$\A^\k$ of $U_q(\k)$-invariants in $\A$. As a (right) $U_q(\g)$-module, $\A$ has the same structure as  the $U(\g)$-module $\C[G]$, hence
$\A^\k$ is a flat deformation of $\C[G/K]$. It is known that the initial star product on $\A$ is local, \cite{EK}, therefore the resulting
multiplication is local as well.

In the non-Levi case, one can go along those lines and define $\A^\k$ as the joint kernel of certain operators that deform
generators of $\k$. Then the new product will be associative on $\A^\k$ as in the Levi case, \cite{KST}. However, those operators do not close up to
a deformation of $U_q(\k)$ so one cannot  be sure that $\A^\k$ has the proper size (observe that kernel can decrease under deformation).
Therefore the problem is to check the size of $\A^\k$. We do it for even dimensional spheres regarded as conjugacy classes of $SO(2n+1)$.
Note that odd dimensional spheres  belong to the second connected component of the orthogonal group $O(2n)$,  and the current methods are not directly applicable.

The paper consists of five sections. After the introduction we recall  quantization of $\C[\Sbb^{2n}]$ via operator realization
on a highest weight module $M_\la$ in Section \ref{QG}. In the next section we construct a system of vectors that spans $M_\la$.
We prove it to be a basis in Section \ref{Shap} by computing the Shapovalov form on $M_\la$. This way
we show that $M_\la$ is irreducible and the form is invertible. In the final section  we show that for locally  finite
$U_q(\g)$-module $V_q$, the dimension of $V^\k_q$ is equal to $\dim V^\k$ of the classical $\k$-invariants. We do it
via realization of finite dimensional module  $V_q$ with $\dim V_q^\k>0$ in the coordinate ring of the quantum Euclidean plane $\C^{2n+1}_q$.
This way we complete the task.

\section{Operator realization of $\C_q[\Sbb^{2n}]$}
\label{QG}
Throughout the paper, $\g$ stands for the Lie algebra $\s\p(2n+1)$.
We are looking for quantization of the polynomial ring $\C[\mathbb{S}^{2n}]$
that is invariant
under an action of the quantized universal enveloping algebra $U_q(\g)$.
We regard $\Sbb^{2n}$ as a conjugacy class of the Poisson group $G=SO(2n+1)$ equipped with the Drinfeld-Sklyanin
bracket corresponding to the standard solution $r\in \g\tp \g$ of the classical Yang-baxter equation, \cite{D}.
The group $G$ supports the Semenov-Tian-Shansky bivector field
\be
r_-^{l,l}+r_-^{r,r}-r_-^{r,l}-r_-^{l,r}+r_+^{r,l}-r_+^{l,r}
\label{STSbr}
\ee
making it a Poisson $G$-space with respect to conjugation.
Here $r_-$ and $r_+$ are, respectively, the skew-symmetric and invariant symmetric parts of $r$, and the superscripts designate
the vector fields
$$
(\xi^l f)(g)=\frac{d}{dt}f(ge^{t\xi})|_{t=0},
\quad
(\xi^r f)(g)=\frac{d}{dt}f(e^{t\xi}g)|_{t=0},
$$
where $\xi\in \g$ and $f$ is a smooth function $G$. This bivecftor field (\ref{STSbr}) is tangent to every conjugacy class of $G$. In particular,
the sphere $\Sbb^{2n}$ becomes a homogeneous Poisson-Lie manifold over $G$.

Quantization of $\C[G]$ along  (\ref{STSbr}) gives rise to the reflection equation dual
of $U_q(\g)$, \cite{DM1}. Accordingly, the algebra $\C[\Sbb^{2n}]$ can be presented as its quotient.
Here we recall that construction.

Let $\h\subset \g$ denote the Cartan subalgebra equipped with the inner product  restricted from an $\ad$-invariant form on $\g$.
We endow the dual space $\h^*$  with the inverse form, $(.,.)$. For
any $\mu\in \h^*$ we denote by $h_\mu\in \h$ the vector such that $\nu(h_\mu)=(\nu,\mu)$ for all $\nu\in \h^*$.
We normalize the inner product   so that the short roots have length $1$.

The root system $\Rm$
is expressed through the orthogonal basis $\La=\{\ve_i\}_{i=1}^n\subset \h^*$
consisting of  short roots: the basis of simple positive roots  $\Pi$ is formed by  $\al_1=\ve_1, \al_{i}=\ve_{i}-\ve_{i-1}$, $i=2,\ldots, n$.
We distinguish the subalgebra $\l\simeq \g\l(n)\subset \g$ of maximal rank with the root basis $\Pi_\l=\{\al_i\}_{i=2}^n$.

Throughout the paper we
assume that $q\in \C$ is not a root of unity.
We use the notation $\bar q=q^{-1}$, $[z]_q=\frac{q^z-q^{-z}}{q-q^{-1}}$, and  $[x,y]_a=xy-ayx$ for $a\in \C$.
The quantum group $U_q(\g)$ is a  $\C$-algebra generated by $q^{\pm h_\al}$, $e_{\pm \al}$, $\al\in \Pi$, such that
$
q^{h_{\al}}e_{\pm \bt}q^{- h_{\al}}= q^{\pm(\al,\bt)} e_{\pm\bt}
$
 and $[e_{\al},e_{-\bt}]=\dt_{\al,\bt}[h_\al]_q$ forall  $\al,\bt \in \Pi$.
The generators $e_{\pm\al}$ satisfy the  q-Serre relations
$$
[e_{\pm\al},[e_{\pm\al},e_{\pm\bt}]_q]_{\bar q}=0, \quad \forall \al,\bt\in \Pi\quad\mbox{s.t.}\quad
 \frac{2(\al,\bt)}{(\al,\al)}=-1, \quad\mbox{and }\quad [e_{\pm\al_1},e_{\pm\dt}]=0,
$$
where $e_{\pm\dt}=[e_{\pm \al_1},[e_{\pm \al_1},e_{\pm \al_2}]_q]_{\bar q}$.
Also, $[e_{\pm \al},e_{\pm \bt}]=0$ once $(\al,\bt)=0$, \cite{D}.

The subset $\Pi_\k=\{\dt,\al_1,\ldots,\al_n\}\subset \Rm^+$ forms
a root basis for  a subalgebra $\k\subset \g$ isomorphic to $\s\o(2n)$. Although
$e_{\pm\dt}$ are deformations of classical root vectors, they do not generate an  $\s\l(2)$-subalgebra in $U_q(\g)$,
so we have no natural subalgebra $U_q(\k)$ in $U_q(\g)$. Still $e_{\pm\dt}$ play a role in what follows.

Fix the weight $\la\in \h^*$ by the conditions $q^{2(\la,\ve_i)}=-q^{-1}$ for all $i=1,\ldots,n$, and $(\al_i, \la)=0$ for $i>1$.
Define two one-dimensional representations $\C_{\pm\la}$  of $U_q(\l)$  by $e^{h_\al}\mapsto q^{\pm (\la,\al)}$, $\al\in \Pi_\g$
and by zero on the generators on non-zero weight. Extend them to representations of $U_q(\p_\pm)$ by zero
on $e_{\pm \al}$ for all  $\al \in \Pi_\g$.
Then set
$$
\hat M_\la=U_q(\g)\tp_{U_q(\p_+)} \C_\la, \quad \hat N_\la=U_q(\g)\tp_{U_q(\p_-)} \C_{-\la}.
$$
Denote by $1_\la\in M_\la$ and $1^*_\la \in N_\la$ their highes/lowest weight generators.
Due to the special choice of $\la$,
the vectors
$
e_{-\dt}1_\la\in \hat M_\la$ and
$e_{\dt}1^*_\la\in \hat N_\la
$
are killed by $e_\al$ and, respectively, $e_{-\al}$ for all $\al\in \Pi$. They generate submodules $\hat M_{\la-\dt}\subset M_\la$ and $\hat N_{\la-\dt}\subset N_\la$.
Set $M_{\la}=\hat M_\la/\hat M_{\la-\dt}$ and $N_\la=\hat N_{\la}/\hat N_{\la-\dt}$.

The module $M_\la$ supports quantization of $\C[\Sbb^{2n}]$ in the following sense.
The sphere $\Sbb^{2n}$ is isomorphic  a subvariety in $G$  of orthogonal matrices with eigenvalues $\pm 1$, where $1$ is multiplicity free.
It is a conjugacy class with a unique point of intersection with the maximal torus relative to $\h$.
The  isotropy subalgebra of this point is $\k$.
Quantization of $\C[G]$ along the Poisson bracket (\ref{STSbr}) can  be  realized as a subalgebra   $\C_q[G]\subset U_q(\g)$ invariant under the adjoint action.
The image of $\C_q[G]$ in $\End(M_\la)$ is an equviariant quantization of $\C[\Sbb^{2n}]$, see \cite{M2} for details.

\section{Spanning $M_\la$}
In this section we introduce a set  of vectors in $M_\la$ which is  proved to be a basis in the subsequent section.
Here we prove that it spans $M_\la$.
Put $f_{\al}=e_{-\al}$ for all simple roots  and define
$$
f_{\ve_i}=[\ldots [f_{\al_1},f_{\al_2}]_{\bar q},\ldots f_{\al_i}]_{\bar q}\in U_q(\g_-),
\quad 1\leqslant i\leqslant n.
$$
The elements $f_{\ve_i}$ can be included in  the set of composite root vectors generating a Poincare-Birkhoff-Witt basis in $U_q(\g_-)$, \cite{ChP}. By deformation arguments,
the set of monomials $\Bc =\{f_{\ve_1}^{m_{\ve_1}}\ldots f_{\ve_n}^{m_{\ve_n}}1_\la\}_{m_1,\ldots,m_n\in \Z_+}$ is a basis in $M_\la$ extended over the local ring
$\C[[\hbar]]$, where $\hbar =\log q$, see \cite{M2} and references therein.
We prove that $\Bc$ is  a $\C$-basis once  $q$ is not a root of unity.

Let $\k_m^-$ denote the subspace $\C f_\dt+\Span\{f_{\al_2},\ldots,f_{\al_m}\}\subset U_q(\g_-)$.
\begin{lemma}
\label{normalizer}
  For all $1<i\leqslant m$, the elements $f_{\ve_i}$ belong to the normalizer of the left ideal $ U_q(\g)\k_m^-$.
\end{lemma}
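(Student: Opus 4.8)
The plan is to reduce the normalizer condition to a finite commutator check. Set $I=U_q(\g)\k_m^-$. For any $x\in U_q(\g)$ the inclusion $xI\subseteq I$ holds automatically, so $f_{\ve_i}$ normalizes $I$ as soon as $If_{\ve_i}\subseteq I$; and since $\k_m^-$ spans the generating subspace, it suffices to prove $g\,f_{\ve_i}\in I$ for each generator $g\in\{f_\dt,f_{\al_2},\dots,f_{\al_m}\}$. As $g\in\k_m^-\subseteq I$ we have $f_{\ve_i}\,g\in I$, so for any scalar $a$ the membership $g\,f_{\ve_i}\in I$ is equivalent to $[g,f_{\ve_i}]_a\in I$. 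The whole statement therefore comes down to evaluating these $q$-commutators modulo $I$, and I would do this by repeatedly using the recursion $f_{\ve_i}=[f_{\ve_{i-1}},f_{\al_i}]_{\bar q}$.

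For a generator $f_{\al_j}$ with $j<i$ the claim is that $f_{\al_j}$ $q$-commutes with $f_{\ve_i}$. When $j\leqslant i-2$ this is immediate, since $f_{\al_j}$ commutes with $f_{\al_i}$ and, by induction on $i$, with $f_{\ve_{i-1}}$, hence with their bracket $f_{\ve_i}$. The boundary value $j=i-1$ is a ``long'' $q$-Serre identity; I would deduce it from the rank-two relation $[f_{\al_{i-1}},[f_{\al_{i-1}},f_{\al_i}]_q]_{\bar q}=0$ fed into the same recursion. The diagonal generator $f_{\al_i}$ (present when $i\leqslant m$) is the one case with a nonzero commutator, and here I expand
\[
f_{\al_i}f_{\ve_i}=f_{\al_i}f_{\ve_{i-1}}f_{\al_i}-\bar q\,f_{\al_i}^{\,2}f_{\ve_{i-1}} .
\]
The first summand already ends in $f_{\al_i}\in\k_m^-$. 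In the second I use that $f_{\al_i}$ commutes with $f_{\ve_{i-2}}$ and apply the $q$-Serre relation $[f_{\al_i},[f_{\al_i},f_{\al_{i-1}}]_q]_{\bar q}=0$ to straighten $f_{\al_i}^{\,2}f_{\al_{i-1}}$, moving a factor $f_{\al_i}$ to the right of every resulting monomial; each term then lies in $I$.

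The generators of index far from $i$ and the generator $f_\dt$ are easier. If $j\geqslant i+2$ then $f_{\al_j}$ commutes with every letter of the word $f_{\ve_i}$, so $[f_{\al_j},f_{\ve_i}]=0$. For $f_\dt$ the weight $\dt+\ve_i$ is never a root when $i>1$, so classically $[f_\dt,f_{\ve_i}]=0$; the quantum vanishing modulo $I$ is obtained inductively from the recursion, the only extra input being the defining relation $[e_{\pm\al_1},e_{\pm\dt}]=0$ that governs the short-root end $\al_1$ of the string.

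The main obstacle is twofold. First, the $q$-Serre bookkeeping: in contrast with the classical Lie bracket, where all the relevant brackets are literally zero, the quantum commutators generate correction terms that must be arranged so that a factor of $\k_m^-$ sits at the extreme right of each monomial, the long Serre identity for $j=i-1$ and the $f_\dt$ computation near $\al_1$ being the delicate points. Second — and this is what pins down the admissible range — the single generator $f_{\al_{i+1}}$ must not occur: from the recursion $f_{\al_{i+1}}f_{\ve_i}=q\,f_{\ve_i}f_{\al_{i+1}}-q\,f_{\ve_{i+1}}\equiv -q\,f_{\ve_{i+1}}\pmod I$, and $f_{\ve_{i+1}}\notin I$ (for instance, $f_{\ve_{i+1}}$ acts nontrivially on the weight-zero vector of the vector representation, which is annihilated by all of $\k_m^-$). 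The commutator check thus goes through exactly when $i+1$ falls outside the set $\{2,\dots,m\}$ of generator indices, and it is this index condition that the hypothesis on $i$ must supply.
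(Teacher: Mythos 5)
Your reduction of the normalizer condition to the finitely many memberships $g\,f_{\ve_i}\in I$, $g\in\{f_\dt,f_{\al_2},\ldots,f_{\al_m}\}$, is the same as the paper's, and your handling of the generators $f_{\al_j}$ tracks the paper's as well: the case $1<j<i$ is the content of Lemma~\ref{lemma_XYZ} (your induction with a ``long'' Serre identity as base case is that lemma in disguise, and it still needs a proof), the case $j=i$ follows from the Serre relations (the paper records it as $[f_{\al_i},f_{\ve_i}]_{\bar q}=0$), and $j\geqslant i+2$ is orthogonality. The genuine gap is the generator $f_\dt$, which you settle in one sentence but which is where essentially the whole of the paper's proof lives. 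Your inductive scheme does work for $i\geqslant 4$: there $[f_\dt,f_{\al_i}]=0$ (by orthogonality --- not by the relation $[e_{\pm\al_1},e_{\pm\dt}]=0$ you cite), so the recursion gives $f_\dt f_{\ve_i}=f_\dt f_{\ve_{i-1}}f_{\al_i}-\bar q\,f_{\al_i}\,f_\dt f_{\ve_{i-1}}\in I$ by induction. But the base case $i=3$ is exactly where it breaks: $f_\dt$ does not commute with $f_{\al_3}$, and in fact $[f_\dt,f_{\ve_3}]\neq 0$ in $U_q(\g)$ --- letting both products act on the monomial $x_2x_3$ in the module algebra $\C_q[\C^N]$ of Section 6, one finds that $f_\dt f_{\ve_3}$ and $f_{\ve_3}f_\dt$ differ there by a factor of $q$, so no commutation identity is available for $q\neq 1$. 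What holds is only the congruence $f_\dt f_{\ve_3}\in U_q(\g)\k_3^-$, and proving it is the paper's displayed computation: one computes modulo $I$ using that $f_\dt$ is \emph{itself} a generator of $\k_m^-$ (this is what produces the rewriting rules (\ref{mod-rel})), pushes these through a chain of Serre manipulations, and concludes because $a f_{\al_2}f_{\al_3}f_{\al_2}-f_{\al_3}f_{\al_2}^2=f_{\al_2}^2f_{\al_3}$ and $[f_{\al_1},f_{\al_3}]=0$ make the final expression end in $f_{\al_3}$. This mechanism --- working modulo the ideal with the ideal's own generators as relations --- is absent from your proposal, and without it the $f_\dt$ case, hence the lemma, is unproved.

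Your closing observation, by contrast, is correct, and it is a point on which you are ahead of the paper rather than behind it. For $i+1\leqslant m$ one indeed has $f_{\al_{i+1}}f_{\ve_i}\equiv -q f_{\ve_{i+1}}\pmod I$, and your witness is valid: $x_0$ is killed by every $f_{\al_j}$ with $j\geqslant 2$ and by $f_\dt$ (Lemma \ref{delta-inv} with $k=1$), while $f_{\ve_{i+1}}\tr x_0$ is a nonzero multiple of $x_{-i-1}$; hence $f_{\ve_{i+1}}\notin I$ and $f_{\ve_i}$ fails to normalize $I$. So the lemma as printed is false whenever $i<m$: of the stated range $1<i\leqslant m$ only $i=m$ is tenable, and the paper's own proof indeed never examines the generators $f_{\al_j}$ with $j>i$. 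Note, moreover, that where the lemma is actually invoked (the proof of Corollary \ref{span}, where $U_q(\g)\k_{i+1}^-$ must kill monomials in $f_{\ve_{i+1}},\ldots,f_{\ve_n}$), what is needed is the opposite range, root index at least ideal index, for which the $f_\dt$ reduction has to be redone (the paper's factorization through $f_{\ve_i-\ve_3}$ is written only under $i\leqslant m$). So your index diagnosis exposes a real defect of the statement, but the hard computational core of the lemma --- the $f_\dt$ congruence --- is still missing from your proof.
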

\begin{proof}
Serre relations readily yield  $[f_{\al_m}, f_{\ve_m}]_{\bar q}=0$, for $m>1$.
  For $1<i<m$, the identity $[f_{\al_i}, f_{\ve_m}]=0$ follows from Lemma \ref{lemma_XYZ}.
So we are left to study how $f_{\al_i}$ commute with $f_\dt$.


  The element $f_\dt$ commutes with  $f_{\ve_2}$, which completes the proof for $m=2$. Suppose that  $m=3$.
All calculations below are done modulo $U_q(\g)\k_m^-$. Denote $a=[2]_q$, then
\be
\label{mod-rel}
f_{\al_1}f_{\al_2}f_{\al_1} =\frac{1}{a}f_{\al_2}f_{\al_1}^{2}
,\quad
f_{\al_2}f_{\al_1}^3=(a-\frac{1}{a})f_{\al_1}f_{\al_2}f_{\al_1}^2
,\quad
f_{\al_2}f_{\al_1}^3=(a^2-1)f_{\al_1}^2f_{\al_2}f_{\al_1},
\ee
where the left equality means $f_\dt \in U_q(\g)\k_3^-$, and the last two equalities are obtained from it and
from $[f_\dt,f_{\al_1}]=0$. Furthermore, Serre relations along with (\ref{mod-rel}) yield
\be
 f_{\al_1}f_{\al_2}(f_{\al_1}  f_{\al_3})f_{\al_2}f_{\al_1}&=& f_{\al_1}f_{\al_2} f_{\al_3}(f_{\al_1} f_{\al_2}f_{\al_1})= \frac{1}{a} f_{\al_1}(f_{\al_2} f_{\al_3}f_{\al_2})f_{\al_1}^2
= \frac{1}{a^2} f_{\al_3}(f_{\al_1}f_{\al_2}^2)f_{\al_1}^2
\nn
\\
&=&
\frac{1}{a}f_{\al_3}f_{\al_2}(f_{\al_1}f_{\al_2}f_{\al_1}^2)-\frac{1}{a^2} f_{\al_3}f_{\al_2}^2f_{\al_1}^3=\frac{1}{a^2-1}f_{\al_3}f_{\al_2}^2f_{\al_1}^3-\frac{1}{a^2} f_{\al_3}f_{\al_2}^2f_{\al_1}^3
\nn
\\
&=&
\frac{1}{a^2(a^2-1)}f_{\al_3}f_{\al_2}^2f_{\al_1}^3,
\nn\\
f_{\al_2}(f_{\al_1}^2  f_{\al_3})f_{\al_2}f_{\al_1}
&=&
f_{\al_2} f_{\al_3} (f_{\al_1}^2 f_{\al_2}f_{\al_1})
=\frac{1}{(a^2-1)}f_{\al_2} f_{\al_3} f_{\al_2}f_{\al_1}^3.
\nn
\ee
Multiply the first equality by $a$ and subtract from the second:
$$
f_\dt f_{\ve_3}=(f_{\al_2}f_{\al_1}^2 - af_{\al_1}f_{\al_2}f_{\al_1}) f_{\al_3}f_{\al_2}f_{\al_1}=
\frac{1}{a(a^2-1)}(af_{\al_2} f_{\al_3} f_{\al_2}-f_{\al_3}f_{\al_2}^2)f_{\al_1}^3 \in U_q(\g)\k_3^-.
$$
This completes the case $m=3$. For $m\geqslant i>3$, put $f_{\ve_i-\ve_3}=[f_{\al_4},\ldots[f_{\al_{i-1}},f_{\al_i}]_{\bar q}\ldots]_{\bar q}\in U_q(\g)\k_m^-$. Then
$
f_\dt  f_{\ve_i}=-q^{-1}f_\dt f_{\ve_i-\ve_3}f_{\ve_3}=-q^{-1}f_{\ve_i-\ve_3}f_\dt f_{\ve_3}\in U_q(\g)\k_m^-,
$
as required.
\end{proof}
\begin{corollary}
\label{span}
  The set $\Bc$ spans $M_\la$.
  The action of $U_q(\g_-)$ on $M_\la$ is given by
  \be
f_{\al_1} f_{\ve_1}^{m_1}\ldots f_{\ve_n}^{m_n}1_\la&=&f_{\ve_1}^{m_1+1} f_{\ve_{2}}^{m_{2}}
\ldots f_{\ve_n}^{m_n}1_\la,
\nn\\
f_{\al_{i+1}} f_{\ve_1}^{m_1}\ldots f_{\ve_n}^{m_n}1_\la&=&-q[m_{i}]_qf_{\ve_1}^{m_1}\ldots f_{\ve_{i}}^{m_{i}-1} f_{\ve_{i+1}}^{m_{i+1}+1}
\ldots f_{\ve_n}^{m_n}1_\la, \quad i>1.
\nn
\ee
\end{corollary}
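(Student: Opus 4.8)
The plan is to prove the two displayed action formulas; the spanning statement then comes for free. Since $1_\la$ generates $M_\la$ over $U_q(\g_-)$ and $U_q(\g_-)$ is generated by $f_{\al_1},\dots,f_{\al_n}$, the formulas exhibit $\Span\Bc$ as stable under every $f_{\al_j}$ and containing $1_\la$, whence $\Span\Bc=M_\la$. The first formula is immediate: $f_{\al_1}=f_{\ve_1}$ by definition, so left multiplication by $f_{\al_1}$ merely raises $m_1$ by one. All the work is in the second formula.

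For the second formula I compute modulo the left ideal $J=U_q(\g)\k_n^-$. By construction $f_{\al_2},\dots,f_{\al_n}$ annihilate $1_\la$ (they act through the one-dimensional $U_q(\l)$-module $\C_\la$), and so does $f_\dt$ (we passed to the quotient by $\hat M_{\la-\dt}$); hence $\k_n^-1_\la=0$ and $J\cdot1_\la=0$, so I may reduce modulo $J$ before applying to $1_\la$. The relations I use are: (A) $[f_{\al_{i+1}},f_{\ve_j}]=0$ for $j<i$ (orthogonality of $\al_{i+1}$ to $\al_1,\dots,\al_{i-1}$) and for $j>i+1$ (Lemma \ref{lemma_XYZ}); (B) $f_{\al_{i+1}}f_{\ve_i}=qf_{\ve_i}f_{\al_{i+1}}-qf_{\ve_{i+1}}$, the definition $f_{\ve_{i+1}}=[f_{\ve_i},f_{\al_{i+1}}]_{\bar q}$ rewritten; (C) $f_{\al_{i+1}}f_{\ve_{i+1}}=\bar q\,f_{\ve_{i+1}}f_{\al_{i+1}}$, the Serre identity $[f_{\al_m},f_{\ve_m}]_{\bar q}=0$ already used in Lemma \ref{normalizer}; and (D) $[f_{\ve_i},f_{\ve_{i+1}}]_q=f_{\ve_i+\ve_{i+1}}\in J$, so that $f_{\ve_i}$ and $f_{\ve_{i+1}}$ $\bar q$-commute modulo $J$. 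For $i=1$ one checks directly that $[f_{\ve_1},f_{\ve_2}]_q=f_\dt\in\k_n^-$; for $i>1$, expanding the inner $f_{\ve_{i+1}}$ and using $f_{\al_{i+1}}\in\k_n^-\subset J$ together with $Jf_{\ve_i}\subseteq J$ from Lemma \ref{normalizer} places $f_{\ve_i+\ve_{i+1}}$ in $J$.

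The heart is the congruence $f_{\al_{i+1}}f_{\ve_i}^{\,m}\equiv q^{m}f_{\ve_i}^{\,m}f_{\al_{i+1}}-q[m]_q\,f_{\ve_i}^{\,m-1}f_{\ve_{i+1}}\pmod J$, proved by induction on $m$: one application of (B), followed by sliding the newly created $f_{\ve_{i+1}}$ back to the right past the surviving $f_{\ve_i}$'s through the $\bar q$-commutation (D), yields the recursion, and the identity $q^{2}[m-1]_q+q^{2-m}=q[m]_q$ is exactly what assembles the quantum integer $[m]_q$ (an ordinary commutator in place of (D) would instead give $\tfrac{q^m-1}{q-1}$, so the $\bar q$ in (D) is what matters). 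Substituting this into the monomial and pulling $f_{\al_{i+1}}$ through $f_{\ve_1}^{m_1}\cdots f_{\ve_{i-1}}^{m_{i-1}}$ by (A), the term $q^{m_i}f_{\ve_i}^{m_i}f_{\al_{i+1}}(\cdots)1_\la$ vanishes, because $f_{\al_{i+1}}$ moves to the right past $f_{\ve_{i+1}}^{m_{i+1}}$ by (C) and past $f_{\ve_j}^{m_j}$ for $j\geq i+2$ by (A), and then $f_{\al_{i+1}}1_\la=0$. The surviving term is $-q[m_i]_q\,f_{\ve_1}^{m_1}\cdots f_{\ve_i}^{m_i-1}f_{\ve_{i+1}}^{m_{i+1}+1}\cdots f_{\ve_n}^{m_n}1_\la$, which is the claim.

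The delicate point, and the reason Lemma \ref{normalizer} is indispensable, is the control of error terms rather than the coefficient count. Each use of (D) spawns a factor $f_{\ve_i+\ve_{i+1}}\in J$ with further copies of $f_{\ve_i}$ standing to its right; to keep the whole expression inside the left ideal $J$ (so that it dies on $1_\la$) one needs right multiplication by $f_{\ve_i}$ to preserve $J$, which is exactly the normalizer property of Lemma \ref{normalizer} for $i>1$. For $i=1$ the error is a multiple of $f_\dt$, which commutes with $f_{\ve_1}=f_{\al_1}$ and therefore stays in $J$ for free. Guaranteeing that every commutator remains in $J$ throughout the reduction is the step that requires care.
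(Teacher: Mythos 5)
Your proof is correct, and its skeleton is the same as the paper's: push $f_{\al_{i+1}}$ rightward through the monomial, reduce modulo a left ideal that annihilates $1_\la$, let Lemma \ref{normalizer} absorb the error terms, and assemble $[m_i]_q$ from the $\bar q$-commutation of $f_{\ve_i}$ with $f_{\ve_{i+1}}$. The two points where you genuinely diverge both work in your favour. First, you reduce modulo the single ideal $J=U_q(\g)\k_n^-$, whereas the paper works with the nested ideals $U_q(\g)\k^-_{i+1}$; since the trailing factors $f_{\ve_{i+1}},\dots,f_{\ve_n}$ of the monomial must also preserve the ideal, and Lemma \ref{normalizer} for $\k^-_{i+1}$ only covers $f_{\ve_j}$ with $j\leqslant i+1$, the paper's final step tacitly passes to the larger ideal anyway --- your uniform choice is the cleaner bookkeeping. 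Second, for the key relation $[f_{\ve_i},f_{\ve_{i+1}}]_q\equiv 0 \pmod{J}$ the paper argues via the ``Jacobi identity'' and Lemma \ref{lemma_XYZ}, splitting into the cases $i=1$ and $i>2$; you instead expand $f_{\ve_{i+1}}=[f_{\ve_i},f_{\al_{i+1}}]_{\bar q}$ and check that all three resulting terms lie in $J$ by the normalizer property, which is more elementary and treats all $i\geqslant 2$ uniformly (including $i=2$, which the paper's case split leaves implicit). Your inductive coefficient identity $q^{2}[m-1]_q+q^{2-m}=q[m]_q$ is the same computation the paper performs in closed form via the sum $\sum_{l}q^{l}f_{\ve_i}^{l}f_{\ve_{i+1}}f_{\ve_i}^{m_i-1-l}=[m_i]_qf_{\ve_i}^{m_i-1}f_{\ve_{i+1}}$, so the substance there is identical.
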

\begin{proof}
First let us show that $f_{\ve_{i+1}}f_{\ve_{i}}= q^{-1}f_{\ve_{i}}f_{\ve_{i+1}}\mod U_q(\g)\k^-_{i+1}$. Indeed,
for $i=1$ we
have $f_{\ve_2}f_{\ve_1}= q^{-1} f_{\ve_1}f_{\ve_2}- q^{-1} f_\dt=q^{-1} f_{\ve_1}f_{\ve_2}\mod U_q(\g)\k_2^-$ as required. For $i>2$ we get
$$
[f_{\ve_i},f_{\ve_{i+1}}]_q=[[f_{\ve_1},f_{\ve_i-\ve_2}]_{\bar q},f_{\ve_{i+1}}]_q
=[f_{\ve_1},[f_{\ve_i-\ve_2},f_{\ve_{i+1}}]]+[[f_{\ve_1},f_{\ve_{i+1}}]_q,f_{\ve_i-\ve_2}]_{\bar q}.
$$
The first summand vanishes since $f_{\ve_i-\ve_2}$ commutes with $f_{\ve_{i+1}}$, by Lemma \ref{lemma_XYZ}. The internal commutator
in the second summand is $[f_\dt,f_{\ve_{i+1}-\ve_2}]\in U_q(\g)\k^-_{i+1}$, so this term is in $U_q(\g)\k^-_{i+1}$ as well.

Now we can complete the proof.  The linear span $\C\Bc$ is
invariant under the obvious action of $f_{\al_1}=f_{\ve_1}$.
For $i>0$, we push $f_{\al_{i+1}}$ to the right in the product
$$
f_{\al_{i+1}}f_{\ve_1}^{m_1}\ldots  f_{\ve_n}^{m_n}1_\la=f_{\ve_1}^{m_1}\ldots f_{\al_{i+1}}f_{\ve_{i}}^{m_i}\ldots  f_{\ve_n}^{m_n}1_\la.
$$
Thanks to Lemma \ref{normalizer}, we replace the product $f_{\al_{i+1}}f_{\ve_{i}}^{m_i}$ with
$$
[f_{\al_{i+1}},f_{\ve_{i}}^{m_i}]_{q^{m_i}}=
-q\sum_{l=0}^{m_i-1-l}q^lf_{\ve_{i}}^lf_{\ve_{i+1}}f_{\ve_{i}}^{m_i-1-l}=-q[m_i]_q f_{\ve_{i}}^{m_i-1}f_{\ve_{i+1}}
\mod U_q(\g)\k^-_{j+1}.
$$
For any form $\Phi$ in $n-i$ variables, the ideal $U_q(\g)\k^-_{j+1}$ kills  $\Phi(f_{\ve_{i+1}},\ldots,  f_{\ve_n})1_\la$
by Lemma \ref{normalizer}.
This yields the action of $f_{\al_{i+1}}$ on $\C\Bc$ and proves its $U_q(\g_-)$-invariance.
Since $\C\Bc\ni 1_\la$,  it coincides with $M_\la$.
\end{proof}

\section{Invariant bilinear form $M_\la\tp N_\la\to \C$}
\label{Shap}
Introduce  positive root vectors by
$$
e_{\ve_i}=[\ldots[e_{\al_{i}},e_{\al_{i-1}}]_q,\ldots e_{\al_1}]_{q},
\quad 1\leqslant i\leqslant n.
$$
The elements $f_{\ve_i},e_{\ve_i}$ are known to generate $U_q(\s\l(2))$-subalgebras in $U_q(\g)$ with
the commutation relation $[e_{\ve_i},f_{\ve_i}]=\frac{q^{h_{\ve_i}}-q^{-h_{\ve_i}}}{q-q^{-1}}$.
Define by induction  $\tilde e_{\ve_{i+1}}=[e_{\al_{i+1}},\tilde e_{\ve_{i}}]_{\bar q}$ with $ \tilde e_{\ve_1}=e_{\al_1}$.
Then $\omega(f_{\ve_i})=\tilde e_{\ve_i}$, where $\omega$ is the Chevalley anti-algebra involution of $U_q(\g)$.

Fix the comultiplication on $U_q(\g)$ as in \cite{ChP}:
$$
\Delta(e_{\al})=e_{\al}\tp q^{h_\al}+1\tp e_{\al}, \quad \Delta(f_{\al})=f_{\al}\tp 1+q^{-h_\al}\tp f_{\al},
$$
and $\Delta(q^{h_\al})=q^{h_\al}\tp q^{h_\al}$, for all $\al \in \Pi$. Then $\gm^{-1}(e_{\al})=-q^{-h_{\al}}e_\al$
for the inverse antipode $\gm^{-1}$.
Define a map $U_q(\g)\to\C$, $x\mapsto \langle x\rangle$, as the composition of
the projection $U_q(\g)\to U_q(\h)\mod \g_-U_q(\g)+U_q(\g)\g_+$ with evaluation at $\la$.
The assignment $(x 1_\la,y1^* _\la)= \langle \gm^{-1}(y)x\rangle$, $\forall x,y\in U_q(\g)$,
defines  a unique invariant bilinear form $M_\la\tp N_\la\to \C$ such that $(1_\la,1^*_\la)=1$.
\begin{lemma}
Suppose that  $k_i,m_i \in \Z_+$, for $i=1,\ldots,n$. Then
\be
  \langle e_{\ve_n}^{k_n}\ldots e_{\ve_1}^{k_1} f_{\ve_1}^{m_1}\ldots  f_{\ve_n}^{m_n}\rangle
  =\prod_{i=1}^{n}\dt_{k_i,m_i} [m_i]_q!\theta^{-m_i}q^{-m_i(\la,\ve_i)-\frac{m_i}{2}}(-1)^{m_i},
  \label{factorization}
\ee
where $\theta=q^{\frac{1}{2}}-q^{-\frac{1}{2}}$.
\end{lemma}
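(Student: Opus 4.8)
The plan is to read $\langle\,\cdot\,\rangle$ as a matrix coefficient on the highest weight line. For any $z\in U_q(\g)$ one has $z\cdot 1_\la=\langle z\rangle\, 1_\la+(\text{terms of weight}<\la)$, so $\langle z\rangle$ is precisely the coefficient of $1_\la$ in $z\cdot 1_\la$; since the submodule $\hat M_{\la-\dt}$ sits in weights strictly below $\la$, this coefficient may be computed in $M_\la$ itself, where the explicit action of Corollary \ref{span} is available. First I would dispose of the off-diagonal cases: the element $e_{\ve_n}^{k_n}\ldots e_{\ve_1}^{k_1}f_{\ve_1}^{m_1}\ldots f_{\ve_n}^{m_n}$ has $\h^*$-weight $\sum_i(k_i-m_i)\ve_i$, while $\langle\,\cdot\,\rangle$ is supported on the zero weight space; as the $\ve_i$ are linearly independent this forces $k_i=m_i$ for every $i$ and yields the factors $\dt_{k_i,m_i}$. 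It then remains to evaluate $\langle e_{\ve_n}^{m_n}\ldots e_{\ve_1}^{m_1}f_{\ve_1}^{m_1}\ldots f_{\ve_n}^{m_n}\rangle$.

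The core is a rank-one computation inside the $U_q(\s\l(2))$ generated by $e_{\ve_i},f_{\ve_i}$. Using $[e_{\ve_i},f_{\ve_i}]=\frac{q^{h_{\ve_i}}-q^{-h_{\ve_i}}}{q-\bar q}$ together with $q^{h_{\ve_i}}f_{\ve_i}q^{-h_{\ve_i}}=q^{-(\ve_i,\ve_i)}f_{\ve_i}=\bar q\, f_{\ve_i}$, a routine induction on $m$ gives $\langle e_{\ve_i}^{m}f_{\ve_i}^{m}\rangle=\prod_{j=1}^{m}\bigl(\sum_{s=0}^{j-1}[(\la,\ve_i)-s]_q\bigr)$. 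I would then substitute the defining condition $q^{2(\la,\ve_i)}=-q^{-1}$: this collapses each inner sum to a single $q$-number and, after factoring $q-\bar q=\theta(q^{1/2}+q^{-1/2})$, turns the product into $[m]_q!\,\theta^{-m}q^{-m(\la,\ve_i)-m/2}(-1)^{m}$, i.e. exactly the $i$-th factor on the right of (\ref{factorization}). The only mildly delicate point here is the bookkeeping of half-integer powers of $q$, which are forced by the normalization $(\ve_i,\ve_i)=1$ of the short roots.

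The substantive step is to show the full bracket factorizes as $\prod_{i}\langle e_{\ve_i}^{m_i}f_{\ve_i}^{m_i}\rangle$. I would argue by induction on $n$, peeling off the top index: write the monomial as $e_{\ve_n}^{m_n}\,X\,f_{\ve_n}^{m_n}$ with $X=e_{\ve_{n-1}}^{m_{n-1}}\ldots e_{\ve_1}^{m_1}f_{\ve_1}^{m_1}\ldots f_{\ve_{n-1}}^{m_{n-1}}$ involving only indices $<n$, and commute $e_{\ve_n}^{m_n}$ to the right so that it meets $f_{\ve_n}^{m_n}$. The decisive structural input is the orthogonality $(\ve_i,\ve_j)=\dt_{ij}$ of the short roots: every weight occurring in $X\cdot 1_\la$ differs from $\la$ only in the directions $\ve_j$ with $j<n$, which are orthogonal to $\ve_n$, so the Cartan element $q^{\pm h_{\ve_n}}$ produced in the $\ve_n$-reduction acts by the same scalar $q^{\pm(\la,\ve_n)}$ as on $1_\la$. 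Consequently the rank-one factor emerges evaluated at the weight $\la$, uncoupled from the remaining indices, and the induction hypothesis supplies $\langle X\rangle$.

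The hard part will be controlling the cross-commutators. Unlike the classical situation the vectors $e_{\ve_i}$ and $f_{\ve_j}$ do not commute for $i\ne j$ (one computes, for instance, $[e_{\ve_1},f_{\ve_2}]=f_{\al_2}q^{h_{\al_1}}$ and $[e_{\ve_2},f_{\ve_1}]=q\,e_{\al_2}q^{-h_{\al_1}}$), so moving $e_{\ve_n}$ past $X$ spawns additional raising and lowering operators of weight $\ve_n\pm\ve_j$. I would show that none of these survive into the coefficient of $1_\la$. The mechanism is a triangularity argument: setting $v_{\mathbf{m}}=f_{\ve_1}^{m_1}\ldots f_{\ve_n}^{m_n}1_\la$ and tracking the action on the basis $\Bc$ through Lemma \ref{normalizer} and Corollary \ref{span}, every term created by a cross-commutator shifts the multi-index $\mathbf{m}$ off the unique diagonal contraction path $v_{\mathbf{m}}\to v_{\mathbf{0}}$ and therefore cannot return to the line $\C\,1_\la$ once the remaining $e_{\ve_j}$ are applied. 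Proving this vanishing cleanly — that only the strictly diagonal pairing of each $e_{\ve_i}^{m_i}$ against $f_{\ve_i}^{m_i}$ reaches the highest weight — is the main obstacle, and it is exactly where the special geometry of the $SO(2n+1)$ case, the orthogonal system $\{\ve_i\}$, is used. Once it is in place the induction closes and (\ref{factorization}) follows by multiplying the rank-one evaluations.
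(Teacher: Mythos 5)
Your handling of the $\dt$-symbols and of the rank-one evaluation is sound and agrees with the paper: your closed form $\prod_{j=1}^m\bigl(\sum_{s=0}^{j-1}[(\la,\ve_i)-s]_q\bigr)$ is exactly the paper's presentation (\ref{Harish}) evaluated at $\la$, via the identity $\sum_{s=0}^{j-1}[a-s]_q=\frac{[j/2]_q}{[1/2]_q}\bigl[a-\frac{j-1}{2}\bigr]_q$. The genuine gap is in the factorization step, and you have named it yourself: the vanishing of the cross-terms is deferred as ``the main obstacle,'' i.e.\ the decisive point is asserted, not proved. Moreover, the route you chose makes that point harder than it has to be. Peeling off the \emph{top} index forces you to drag $e_{\ve_n}^{m_n}$ through a word containing both raising and lowering operators of all lower indices, so the commutators you must control are $[e_{\ve_n},f_{\ve_j}]$ with $j<n$ --- e.g.\ $[e_{\ve_2},f_{\ve_1}]=q\,e_{\al_2}q^{-h_{\al_1}}$, by your own computation. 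These produce raising operators of $\l$ sitting in the \emph{middle} of the word, where they do not obviously annihilate anything, and no statement in the paper covers them. Your proposed ``triangularity'' mechanism cannot be a weight argument: once $k_i=m_i$, every cross-term has total weight zero, so weight bookkeeping can never exclude a contribution to $\C 1_\la$. Nor can you ``track the action on the basis $\Bc$'' through Corollary \ref{span}: that corollary computes the action of the lowering generators $f_{\al_i}$ only; how raising operators (let alone composite elements like $[e_{\ve_n},f_{\ve_j}]$) act on $\Bc$ is established nowhere.

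The paper closes exactly this hole by running the induction in the opposite direction, from the \emph{innermost} (smallest) index outwards. At step $s$ it applies (\ref{Harish}) to the innermost pair $e_{\ve_s}^{m_s}f_{\ve_s}^{m_s}$, so the only error terms lie in the left ideal $U_q(\g)e_{\ve_s}$, and these are killed exactly rather than by a triangularity heuristic: for $i>s$ one has $[e_{\ve_s},f_{\ve_i}]\in U_q(\g)\k_i^-$, and by Lemma \ref{normalizer} that ideal annihilates $f_{\ve_{s+1}}^{m_{s+1}}\cdots f_{\ve_n}^{m_n}1_\la$, whence $e_{\ve_s}f_{\ve_{s+1}}^{m_{s+1}}\cdots f_{\ve_n}^{m_n}1_\la=0$. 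The Cartan factor from (\ref{Harish}) then acts by its value at $\la$ because the surviving $f_{\ve_i}$ carry weights orthogonal to $\ve_s$ --- the one place where your orthogonality observation is used, and where it suffices. Note that the crucial commutator fact pairs a \emph{smaller} $e$-index against a \emph{larger} $f$-index; your outside-in ordering needs precisely the opposite pairing, for which no such ideal membership holds. To repair the proof, either switch to the inside-out induction and supply the fact $[e_{\ve_s},f_{\ve_i}]\in U_q(\g)\k_i^-$ for $i>s$ (which follows from the nested definition of $f_{\ve_i}$ together with Lemma \ref{lemma_XYZ}), or give an actual argument for your vanishing claim; as written, the proposal stops exactly where the proof has to begin.
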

\begin{proof}
  The $\dt$-symbols  are due to orthogonality of weight subspaces $M_\la[\mu]$ and $N_\la[\nu]$ unless $\mu=-\nu$.
  Now we prove factorization of the matrix coefficients on setting $k_i=m_i$ for all $i$.
  Observe that
\be
e_{\ve}^{m} f_{\ve}^{m}
=
\prod_{l=1}^m\frac{[\frac{l}{2}]_q}{[\frac{1}{2}]_q}\prod_{l=0}^{m-1}[h_{\ve}-\frac{l}{2}]_q \mod U_q(\g)e_{\ve}, \quad \forall\ve\in \La,
\label{Harish}
\ee
and $\langle e_{\ve}^{m} f_{\ve}^{m}\rangle=(-1)^mq^{-m(\la,\ve)-\frac{m}{2}}\theta^{-m}[m]_q!$ on  substitution $q^{2(\la,\ve)}=-q^{-1}$.
 Suppose we have proved that the l.h.s. of (\ref{factorization}) is equal to
$
  \langle e_{\ve_n}^{m_n}\ldots e_{\ve_s}^{m_s} f_{\ve_s}^{m_s}\ldots  f_{\ve_n}^{m_n}\rangle\prod_{l=1}^{s-1}\langle e_{\ve_{l}}^{m_{l}} f_{\ve_{l}}^{m_{l}}\rangle
$
for some $s=1,\ldots, n-1$. For all $i>s$, $[e_{\ve_s},f_{\ve_i}]\in U_q(\g)\k^-_{i}$. Then
$e_{\ve_s}\ldots  f_{\ve_{s+1}}^{m_{s+1}}\ldots  f_{\ve_n}^{m_n}1_\la=0$, by Lemma \ref{normalizer}. Now the presentation
(\ref{Harish}) for $\ve=\ve_s$, along with orthogonality of different $\ve_i$, gives
$
  \langle e_{\ve_n}^{m_n}\ldots e_{\ve_{s-1}}^{m_{s-1}} f_{\ve_{s-1}}^{m_{s-1}}\ldots  f_{\ve_n}^{m_n}\rangle\prod_{l=1}^{s}\langle e_{\ve_{l}}^{m_{l}} f_{\ve_{l}}^{m_{l}}\rangle
$. Induction on $s$ completes the proof.
\end{proof}
There is also $\omega$-contravariant form on $M_\la$ defined by
$x1_\la\tp y1_\la\mapsto \langle \omega(x)y\rangle$, for all $x,y\in U_q(\g)$.
It is called the Shapovalov form and related with the invariant form in the obvious way.
\begin{propn}
Suppose that $q$ is not a root of unity. Then
\begin{enumerate}
  \item $\Bc\subset M_\la$ is an orthogonal (non-normalized)  basis with respect to the Shapovalov form.
  \item The modules $M_\la$ and $N_\la$ are irreducible.
  \item
The tensor
$$
 \Fc=\sum_{m_1,\ldots,m_n=0}^{\infty}(-\theta)^{\sum_{i=1}^{n}m_i}\>
\frac{\prod_{i=1}^{n}q^{-\frac{m_i^2}{2}+2m_i(i-1)}}{\prod_{i=1}^n[m_i]_{q}!}
\tilde e_{\ve_1}^{m_1}\ldots \tilde e_{\ve_n}^{m_n} \tp f_{\ve_1}^{m_1}\ldots  f_{\ve_n}^{m_n}
$$
is a lift of the inverse invariant form, $\C\to N_\la \tp M_\la\to  U_q(\g_+)\tp U_q(\g_-)$, $1\mapsto  \Fc$.
\end{enumerate}
\end{propn}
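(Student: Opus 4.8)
The whole proposition rests on the matrix-coefficient computation (\ref{factorization}); the plan is to read non-degeneracy of the invariant pairing off it and then deduce the three assertions in turn. First I would note that (\ref{factorization}) computes the invariant pairing $M_\la\tp N_\la\to\C$ between $\Bc$ and a spanning set of $N_\la$ (the lowest weight mirror of Corollary \ref{span}, reparametrized so that $\gm^{-1}$ of the relevant element runs over the ordered monomials $e_{\ve_n}^{k_n}\ldots e_{\ve_1}^{k_1}$). Its Gram matrix is diagonal, and when $q$ is not a root of unity every $[m_i]_q!$ is nonzero, so the matrix is invertible. Hence the invariant pairing is non-degenerate and, in particular, every vector $f_{\ve_1}^{m_1}\ldots f_{\ve_n}^{m_n}1_\la$ is nonzero.

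This already yields most of the first assertion. The monomials of $\Bc$ lie in the pairwise distinct weight spaces $M_\la[\la-\sum_i m_i\ve_i]$, where the linear independence of $\{\ve_i\}$ is used, so they are mutually orthogonal for the Shapovalov form; being individually nonzero by the previous paragraph, they are linearly independent, and since they span $M_\la$ by Corollary \ref{span} they form a basis.

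The second assertion is then formal: the left and right radicals of the invariant pairing are, respectively, the maximal proper submodules of the highest weight module $M_\la$ and of the lowest weight module $N_\la$, and non-degeneracy forces both radicals to vanish, so $M_\la$ and $N_\la$ are irreducible. Irreducibility of $M_\la$ makes the Shapovalov form non-degenerate; on the weight-orthogonal basis $\Bc$ a non-degenerate form must have nonzero diagonal entries, for otherwise such a vector would lie in the radical. This shows $\Bc$ is an orthogonal (non-normalized) basis and finishes the first assertion.

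For the third assertion, non-degeneracy supplies the inverse copairing $\C\to N_\la\tp M_\la$; in the basis $\Bc$ and its dual it equals
\[
\sum_{\mb}d_{\mb}^{-1}\,\tilde e_{\ve_1}^{m_1}\ldots\tilde e_{\ve_n}^{m_n}1^*_\la\tp f_{\ve_1}^{m_1}\ldots f_{\ve_n}^{m_n}1_\la,\qquad
d_{\mb}=\langle\gm^{-1}(\tilde e_{\ve_1}^{m_1}\ldots\tilde e_{\ve_n}^{m_n})\,f_{\ve_1}^{m_1}\ldots f_{\ve_n}^{m_n}\rangle,
\]
where $\mb=(m_1,\ldots,m_n)$ and the weight-diagonality of the pairing guarantees that only matched monomials contribute. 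Deleting the highest and lowest weight generators lifts this copairing to $U_q(\g_+)\tp U_q(\g_-)$, and it remains to identify $d_{\mb}^{-1}$ with the coefficient displayed in $\Fc$. I expect this to be the only genuine difficulty. The scalar $d_{\mb}$ is not the entry of (\ref{factorization}) as it stands: the antipode reverses the order of the $\tilde e_{\ve_i}$ and, through $\gm^{-1}(e_\al)=-q^{-h_\al}e_\al$, inserts Cartan factors $q^{-h}$ whose evaluation on the intervening weight spaces is quadratic in the $m_i$. Tracking this accumulation — the $m_i$ equal factors $q^{-h_{\ve_i}}$ against vectors of weight built from $\la$ and the partial sums of the $\ve_j$, together with the reordering of the composite $\tilde e_{\ve_i}$ assembled from the non-orthogonal simple roots $\al_1,\ldots,\al_i$ — is what should generate the powers $q^{-m_i^2/2}$ and the $i$-dependent $q^{2m_i(i-1)}$, while $q^{2(\la,\ve_i)}=-q^{-1}$ turns the surviving linear exponent, together with the signs $(-1)^{m_i}$ and the factors $\theta^{-m_i}$, into the overall $(-\theta)^{\sum_i m_i}$. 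Matching these exponents precisely is the crux; once this is done, $\Fc$ is exactly the lift of the inverse invariant form.
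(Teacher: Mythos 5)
Your treatment of assertions (1) and (2) is correct and essentially the paper's own argument: the diagonal Gram matrix of (\ref{factorization}) has nonzero entries because $[m_i]_q!\neq 0$ when $q$ is not a root of unity, whence independence of $\Bc$, non-degeneracy of the invariant pairing, and irreducibility of $M_\la$ and $N_\la$ all follow formally. Your weight-space argument for the Shapovalov-orthogonality of $\Bc$ is a clean substitute for the paper's explicit identity
$\langle e_{\ve_n}^{k_n}\ldots e_{\ve_1}^{k_1} f_{\ve_1}^{m_1}\ldots f_{\ve_n}^{m_n}\rangle
=q^{\sum_{i}2k_i(i-1)}\langle \omega(f_{\ve_1}^{k_1}\ldots f_{\ve_n}^{k_n}) f_{\ve_1}^{m_1}\ldots f_{\ve_n}^{m_n}\rangle$; either route is fine.

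Assertion (3) is where your proposal stops short of being a proof. You correctly reduce it to identifying your diagonal coefficients $d_{\mb}^{-1}$ with the displayed ones, and you correctly name the mechanism (the antipode reverses the composite root vectors and inserts Cartan factors whose evaluation is quadratic in the $m_i$), but you then declare this matching to be ``the crux'' and leave it undone. That matching \emph{is} the content of assertion (3); everything preceding it --- the existence of a diagonal inverse copairing in dual bases --- is formal. The paper closes exactly this gap with one concrete identity, $\gm(e_{\ve_i})=-\tilde e_{\ve_i}q^{-h_{\ve_i}+2(i-1)}$, used together with $(x 1_\la,\gm(y)1^*_\la)=\langle yx\rangle$. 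From it one gets
$\gm(e_{\ve_1})^{k_1}\cdots\gm(e_{\ve_n})^{k_n}1^*_\la
=\prod_i(-1)^{k_i}q^{-\binom{k_i}{2}+2k_i(i-1)+k_i(\la,\ve_i)}\,
\tilde e_{\ve_1}^{k_1}\cdots\tilde e_{\ve_n}^{k_n}1^*_\la$
(commute each $q^{-h_{\ve_i}}$ through the $\tilde e_{\ve_i}$-powers, using $(\ve_i,\ve_i)=1$ and orthogonality of the $\ve_j$, then evaluate on the lowest weight); combining this scalar with the entry of (\ref{factorization}) and the relation $q^{2(\la,\ve_i)}=-q^{-1}$ collapses $\binom{m_i}{2}+\frac{m_i}{2}$ to $\frac{m_i^2}{2}$ and produces precisely $(-\theta)^{\sum_i m_i}\prod_i q^{-\frac{m_i^2}{2}+2m_i(i-1)}/\prod_i[m_i]_q!$. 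Without proving the antipode identity (a short induction on the definition of $e_{\ve_i}$ and $\tilde e_{\ve_i}$) and performing this bookkeeping, your argument establishes only that \emph{some} diagonal lift of the inverse form exists, not that it is the tensor $\Fc$ in the statement.
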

\begin{proof}
1) Corollary \ref{span} with Lemma \ref{factorization} prove completeness of $\Bc$ and independence.
Since $\langle e_{\ve_n}^{k_n}\ldots e_{\ve_1}^{k_1} f_{\ve_1}^{m_1}\ldots  f_{\ve_n}^{m_n}\rangle=q^{\sum_{i=1}^{n}2k_i(i-1)}\langle \omega(f_{\ve_1}^{k_1}\ldots f_{\ve_n}^{k_n}) f_{\ve_1}^{m_1}\ldots  f_{\ve_n}^{m_n}\rangle$,
 the basis $\Bc$ is orthogonal with respect to the Shapovalov form.
 2) Non-degeneracy of the form implies irreducibility of $M_\la$.
 3) The normalizing coefficients in $ \Fc$ are  obtained from (\ref{factorization}) via the equality  $(x 1_\la,\gm(y)1^*_\la)= \langle  y x \rangle$
  and $\gm(e_{\ve_i})=-\tilde e_{\ve_i}q^{-h_{\ve_i}+2(i-1)}$ for all $i=1,\ldots,n$.
\end{proof}
\section{Star-product on $\Sbb^{2n}$}
Denote by $\A_q$  the RTT dual of $U_q(\g)$ with multiplication $\bullet$ and the Hopf paring
$(.,.)$.
It is equipped with the two-sided action (here $a^{(1)}\tp a^{(2)}=\Delta(a)$ in the Sweedler notation)
$$
x \tr a=a^{(1)}(a^{(2)},x), \quad a\tl x=(a^{(1)},x)a^{(2)}, \quad \forall x\in U_q(\g), \quad a \in \A_q,
$$
making it a $U_q(\g)$-bimodule algebra.
The multiplication $\bullet$ is known to be local, \cite{EK}. We define a new operation $\star$ by
\be
\label{star}
f\star g= (\Fc_1\tr f )\bullet(\Fc_2 \tr g), \quad f,g \in \A_q,
\ee
where $\tr$ stands for the left co-regular action.
It is obviously equivariant with respect to the right co-regular action of $U_q(\g)$.
However, $\star$ is not associative on the entire $\A_q$.

For every $\U_q(\g)$-module $V$ we define $V^\k\subset V$ to be the intersection of the space  $V^\l$ of $U_q(\l)$-invariants  with the joint kernel of the operators $e_\dt$ and $f_\dt$. For $q=1$, this definition coincides with the subspace  of $U(\k)$-invariants.

\begin{propn}
   $\A^\k_q$ is an associative $U_q(\g)$-algebra  with respect to $\star$.
\end{propn}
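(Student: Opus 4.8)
The plan is to reduce associativity of $\star$ to a cocycle identity for $\Fc$ and then to show that the resulting defect is annihilated, under the co-regular action $\tr$, on the subspace $\A_q^\k$. Recall that $\tr$ makes $\A_q$ a left $U_q(\g)$-module algebra, $x\tr(a\bullet b)=(x^{(1)}\tr a)\bullet(x^{(2)}\tr b)$, and that $(yz)\tr a=y\tr(z\tr a)$. Expanding $(f\star g)\star h$ and $f\star(g\star h)$ with (\ref{star}) and collapsing the iterated actions, one sees that both are $\bullet$-products of the three factors $f,g,h$ twisted, respectively, by the elements $(\Delta\tp\id)(\Fc)\,(\Fc\tp1)$ and $(\id\tp\Delta)(\Fc)\,(1\tp\Fc)$ of $U_q(\g)^{\tp3}$ acting slot-wise via $\tr$. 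Hence it suffices to show that the defect
\be
\label{defect}
D=(\Delta\tp\id)(\Fc)\,(\Fc\tp1)-(\id\tp\Delta)(\Fc)\,(1\tp\Fc)
\ee
annihilates $\A_q^\k\tp\A_q^\k\tp\A_q^\k$ slot-wise under $\tr$.

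The key input is that $\Fc$ is the lift of the inverse of the invariant form $M_\la\tp N_\la\to\C$, which by the preceding proposition is non-degenerate since $M_\la$ and $N_\la$ are irreducible. This places us in the situation of the Levi case, where, as in \cite{KST}, the uniqueness of the invariant form (equivalently, the coherence of the composition of invariant pairings) yields a cocycle identity $D=0$ valid modulo the left ideal that cuts out the invariants. The content requiring verification here is the identification of that ideal as the left ideal $\Jc\subset U_q(\g)$ generated by $e_\dt$, $f_\dt$ and the augmentation ideal of $U_q(\l)$, so that $D\in\Jc\tp U_q(\g)\tp U_q(\g)+U_q(\g)\tp\Jc\tp U_q(\g)+U_q(\g)\tp U_q(\g)\tp\Jc$. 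These generators are exactly the operators whose action encodes the quotient relations $M_\la=\hat M_\la/\hat M_{\la-\dt}$, $N_\la=\hat N_\la/\hat N_{\la-\dt}$ and the weight conditions on $\la$; concretely, Corollary \ref{span} shows that the annihilators of $1_\la$ and $1^*_\la$ are generated by $f_\dt,e_\dt$ and the simple root vectors of $\l$, all of which lie in $\Jc$.

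With $\Jc$ so identified the conclusion is immediate. By definition every $f\in\A_q^\k$ is $U_q(\l)$-invariant, $x\tr f=\ve(x)f$, and lies in the joint kernel of $e_\dt$ and $f_\dt$; hence each generator of $\Jc$ annihilates $f$ under $\tr$, and since $(yz)\tr f=y\tr(z\tr f)$ the whole left ideal $\Jc$ acts by zero. Each summand of $D$ therefore kills $f\tp g\tp h$ in the corresponding slot, $D$ acts by zero, and $\star$ is associative on $\A_q^\k$. Closure of $\star$ on $\A_q^\k$ follows by the same annihilation argument applied to the $U_q(\g)$-invariance of $\Fc$, which gives $z^{(1)}\Fc_1\tp z^{(2)}\Fc_2\equiv\ve(z)\,\Fc_1\tp\Fc_2$ modulo $\Jc$ in each leg; and right $U_q(\g)$-equivariance was already noted after (\ref{star}). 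Since the right co-regular action commutes with $\tr$ it preserves $\A_q^\k$, so $(\A_q^\k,\star)$ is indeed a $U_q(\g)$-algebra.

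The main obstacle is the middle step, establishing (\ref{defect}) modulo $\Jc$ in the present non-Levi setting. In the Levi case $\k$ embeds as a genuine quantum subgroup and the defect lands in the left ideal generated by the negative root vectors of $\k$, so the computation of \cite{KST} transfers verbatim. Here $e_{\pm\dt}$ do not generate a copy of $U_q(\s\l(2))$ and there is no subalgebra $U_q(\k)\subset U_q(\g)$; the argument must instead be run modulo the non-standard ideal $\Jc$, and the technical fact making this possible is precisely Lemma \ref{normalizer}: because the composite vectors $f_{\ve_i}$ normalise $U_q(\g)\k_m^-$, the quotient relations defining $M_\la$ and $N_\la$ remain available after the legs of $\Fc$ are commuted past the generators of $\k_m^-$, which is what keeps the cocycle defect inside $\Jc$.
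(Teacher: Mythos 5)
Your reduction of associativity to the statement that the defect $D=(\Delta\tp\id)(\Fc)\,(\Fc\tp1)-(\id\tp\Delta)(\Fc)\,(1\tp\Fc)$ annihilates $\A_q^\k\tp\A_q^\k\tp\A_q^\k$ slot-wise is correct, but the step that carries all the weight --- membership of $D$ in $U_q(\g)\Jc\tp U_q(\g)\tp U_q(\g)+U_q(\g)\tp U_q(\g)\Jc\tp U_q(\g)+U_q(\g)\tp U_q(\g)\tp U_q(\g)\Jc$ --- is asserted, not proved, and nothing you cite supplies it. Lemma \ref{normalizer} is a statement about the composite vectors $f_{\ve_i}$ normalizing the left ideal $U_q(\g)\k_m^-$ inside $U_q(\g)$; it says nothing about the coefficients of $\Fc$ or about how $(\Delta\tp\id)(\Fc)$ interacts with $\Fc\tp1$, and there is no shifted-cocycle computation in \cite{KST} that could ``transfer verbatim'': both \cite{KST} and this paper prove associativity on invariants by a mechanism that never mentions $D$. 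One identifies $V^\k$ with $\Hom(M_\la\tp N_\la,V)$ (this is where the quotient description $M_\la=\hat M_\la/\hat M_{\la-\dt}$ and the definition of $V^\k$ enter), realizes $f\star g$ as the image under $\bullet$ of the element $f\circledast g\in(\A_q\tp\A_q)^\k$ corresponding to the composition $\tilde\phi\circ\tilde\psi$ of operators in $\End^\circ_\C(M_\la)$, and obtains associativity for free from associativity of operator composition. In the non-Levi setting this Hom-space argument \emph{is} the proof; your cocycle identity modulo $\Jc$, were it true, would itself require an argument of comparable depth, which your text explicitly postpones (``the main obstacle'') and then only gestures at.

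Moreover, the intermediate statements you do write down are not correct as stated. What invariance of the inverse form gives, after lifting to $U_q(\g)\tp U_q(\g)$, is that $\Delta(z)\Fc-\ve(z)\Fc$ lies in $\Ann(1^*_\la)\tp U_q(\g)+U_q(\g)\tp\Ann(1_\la)$, where $\Ann$ denotes the left-ideal annihilators of the lowest and highest weight generators. These annihilators are \emph{not} contained in $\Jc$: $\Ann(1_\la)$ contains $e_{\al_1}$ and the Cartan shifts $q^{h_\al}-q^{(\la,\al)}$, and $\Ann(1^*_\la)$ contains $f_{\al_1}$; none of these kills elements of $\A_q^\k$ under $\tr$, since $\A_q^\k$ is not a $U_q(\g)$-submodule. (For the same reason your claim that Corollary \ref{span} identifies the annihilators as generated by $e_\dt$, $f_\dt$ and the root vectors of $\l$ is false.) So ``modulo $\Jc$ in each leg'' does not follow from invariance of $\Fc$, and both your closure argument and the parallel ideal-membership claim for $D$ have a hole exactly where the non-Levi difficulty sits. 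The paper's route avoids this entirely: closure of $\star$ on $\A_q^\k$ holds because $f\circledast g$ lies in $(\A_q\tp\A_q)^\k$ by construction of the bijection $\Phi$, and $\bullet$ is a morphism of $\tr$-modules, so its image is again in $\A_q^\k$.
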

\begin{proof}
Identify $M_\la^*$ with $N_\la^{**}$ and $M_\la\tp M_\la^*$ with the module of locally finite endomorphisms $\End^\circ_\C(M_\la)$.
   For every completely reducible  module $V$, there is a unique $\tilde \phi\in \Hom(V^*,\End_\C^\circ(V))$ for  each $\phi\in \Hom(M_\la\tp N_\la,\V)$,
   due to the natural isomorphism of the Hom-sets.

  Let $\hat M_\la$ and $\hat N_\la$ denote the Verma modules, i. e. induced from the $U_q(\b_\pm)$-modules $\C_{\mp \la}$.
  Every homomorphism $M_{\la}\tp N_{\la} \to    V$ amounts to a homomorphism $\hat M_{\la}\tp \hat N_{\la} \to  V$
  vanishing on $\sum_{\al \in \Pi_\k}\hat M_{\la-\al}\tp \hat  N_{\la}+\sum_{\al \in \Pi_\k}\hat  M_{\la}\tp  \hat N_{\la-\al}$.
Therefore  $\phi$ corresponds  to a unique zero weight element $\Phi(\phi)\in \cap_{\al \in \pm \Pi_\k}\ker e_\al = V^\k$.
Given also $\psi\in\Hom(M_\la\tp N_\la, W)$ there is a unique element $\phi\circledast \psi \in \Hom(V\tp W,\End_\C^\circ(M))$ such that
 $\widetilde{\phi\circledast \psi}=\tilde \phi\circ \tilde \psi$,  where $\circ$ is the multiplication in
$\End^\circ(M_\la)$. Define  $\Phi(\phi)\circledast  \Phi(\psi)=\Phi(\phi\circledast \psi)\in (V\tp W)^\k$.

 Now take $V=W=\A_q$ and $f,g\in \A_q^\k$ (with respect to the $\tr$-action).
Then  $f\star g$ is the image of $f\circledast  g \in (\A_q\tp \A_q)^\k$ under the multiplication $\bullet\colon \A_q\tp \A_q\to \A_q$,
which is again in $\A_q^\k$ since $\bullet$ is $\tr$-equivariant.
Associativity of $\star$ follows from associativity of $\circ $ and $\bullet$.
\end{proof}
\begin{thm}
\label{main_thm}
  The right $\U(\g)$-module $\A^\k_q$ is a deformation of the $\U(\g)$-module $\C[G]^\k$. The  multiplication $\star$  makes
 $\A^\k_q$ an associative $\U_q(\g)$-algebra, a quantization of $\C[\Sbb^{2n}]$.
\end{thm}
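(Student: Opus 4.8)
The plan is to reduce the whole statement to a single dimension count and then settle that count by an external module realization. As a right $U_q(\g)$-module the RTT dual has the Peter--Weyl type decomposition $\A_q\cong\bigoplus_V V_q\tp V_q^*$, the sum ranging over the irreducible finite-dimensional $U_q(\g)$-modules, indexed by dominant weights exactly as in the classical decomposition $\C[G]\cong\bigoplus_V V\tp V^*$. The left co-regular action $\tr$ operates on one tensor factor, so passing to $\k$-invariants selects the invariants of that factor, and each irreducible occurs in $\A_q^\k$, as a right module, with multiplicity $\dim V_q^\k$; the same bookkeeping gives multiplicities $\dim V^\k$ in $\C[G]^\k$. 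Thus the first assertion of the theorem is equivalent to the equality $\dim V_q^\k=\dim V^\k$ for every irreducible $V$, and this is the content I would isolate and prove.

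For the easy direction I would record an upper bound $\dim V_q^\k\leqslant\dim V^\k$. Since $\Pi_\l\subset\Pi$, the algebra $U_q(\l)$ is a genuine Levi quantum subgroup whose representation theory deforms that of $\l\simeq\g\l(n)$, so $\dim V_q^\l=\dim V^\l$ and a $q$-regular basis of $V_q^\l$ can be chosen. In such a basis the operators $e_\dt,f_\dt$ have matrix entries regular in $q$ specializing at $q=1$ to the classical ones, and $V_q^\k$ is the kernel of $v\mapsto(e_\dt v,f_\dt v)$ on $V_q^\l$. Upper semicontinuity of the kernel dimension then yields $\dim V_q^\k\leqslant\dim V^\k$ off a proper closed subset of the parameter; in particular every non-spherical $V$ (those with $V^\k=0$) satisfies $V_q^\k=0$.

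The essential and most delicate step is the matching lower bound for the spherical modules $V_{k\omega}$, $\omega$ the highest weight of the vector representation, with $\dim V^\k=1$: here the absence of an honest $U_q(\k)$ really bites, because the joint kernel defining $V_q^\k$ can a priori shrink under deformation. I would resolve this inside the $U_q(\g)$-module algebra $\O_q=\C_q[\C^{2n+1}]$, a flat deformation of $\C[\C^{2n+1}]$ carrying the harmonic decomposition $\O_q\cong\bigoplus_{k,j}\rho^{\,j}\mathcal{H}_k$, where $\rho$ is the central quantum radius and $\mathcal{H}_k\cong V_{k\omega}$. One checks directly in the vector representation that the linear coordinate $z$ dual to the zero weight, and $\rho$, are $\k$-invariant; hence every $z^k\in\O_q^\k$ is $\k$-invariant, and since $\rho$ is central and $\k$-invariant the projection onto the top harmonic piece $\mathcal{H}_k$ commutes with the $\k$-action, producing a nonzero invariant there and so $\dim V_{k\omega,q}^\k\geqslant1$. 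Computing that $z,\rho$ generate $\O_q^\k$ as a polynomial algebra gives graded dimension $\lfloor k/2\rfloor+1$ in degree $k$, which must equal $\sum_{j\geqslant0}\dim\mathcal{H}_{k-2j}^\k$; as this sum has $\lfloor k/2\rfloor+1$ summands each at least $1$, every summand equals $1$, forcing $\dim V_{k\omega,q}^\k=1$. Since a nonzero invariant vector realizes $V_q$ as a spherical module occurring in $\O_q$, no other irreducible acquires invariants, and $\dim V_q^\k=\dim V^\k$ in all cases.

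Finally I would assemble the theorem. The dimension equality identifies the isotypic decompositions of $\A_q^\k$ and $\C[G]^\k$ as right modules, giving $\A_q^\k\cong\bigoplus_{k\geqslant0}V_{k\omega}^*$ and proving the deformation statement. Associativity and right-$U_q(\g)$-equivariance of $\star$ on $\A_q^\k$ are the preceding Proposition; locality of $\star$ follows from locality of $\bullet$ (\cite{EK}) together with the expression of $\Fc$ through the PBW root vectors $\tilde e_{\ve_i},f_{\ve_i}$, which realizes $\star$ by a bidifferential operator. As $\Fc=1\tp1+O(\hbar)$, the product $\star$ reduces at $q=1$ to the commutative multiplication on $\C[G]^\k=\C[\Sbb^{2n}]$ with first-order term the bracket (\ref{STSbr}), so $(\A_q^\k,\star)$ is the desired equivariant quantization. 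The one genuine obstacle is the third paragraph: everything else is formal manipulation or semicontinuity, whereas controlling the joint kernel of the non-closing operators $e_\dt,f_\dt$ forces the passage through the quantum Euclidean plane.
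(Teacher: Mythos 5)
Your strategy coincides with the paper's: reduce the theorem, via the Peter--Weyl decomposition $\A_q\cong\bigoplus_V V_q\tp V_q^*$ and the preceding Proposition, to the single equality $\dim V_q^\k=\dim V^\k$, and settle that count inside the quantum Euclidean plane using the zero-weight coordinate $x_0$ and the quadratic invariant $C_q$. But there is a genuine gap at precisely the step you yourself call ``the one genuine obstacle''. From the fact that $z=x_0$ is killed by $e_\dt,f_\dt$ in the vector representation you conclude ``hence every $z^k$ is $\k$-invariant''. That inference is invalid: $e_{\pm\dt}$ are not skew-primitive in $U_q(\g)$, so the joint kernel of $e_\dt,f_\dt$ in a $U_q(\g)$-module algebra is \emph{not} closed under multiplication. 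Indeed $\Delta(e_\dt)$ contains cross terms whose tensor legs are root vectors of weights $\al_1$ and $\al_1+\al_2$, and neither kind annihilates powers of $x_0$ (e.g.\ $e_{\al_1}\tr x_0=x_1\neq0$, and $[e_{\al_1},e_{\al_2}]_q\tr x_0^j\neq 0$ for $j\geqslant 1$); thus $e_\dt\tr(x_0\cdot x_0^{k-1})=\sum(e_\dt^{(1)}\tr x_0)(e_\dt^{(2)}\tr x_0^{k-1})$ is a sum of individually nonvanishing contributions whose cancellation must be proved. That cancellation is exactly Lemma \ref{delta-inv} of the paper, established by an explicit computation (the identities with the coefficients $c_k$); your proposal replaces it by the word ``hence''. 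Since the whole difficulty of the non-Levi case is that $e_{\pm\dt}$ do not close into a quantum subgroup, this is the step carrying the real content, and without it the lower bound on $\dim V_q^\k$ --- and with it the theorem --- is unproven.

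Two smaller points. First, your dismissal of non-spherical modules (``a nonzero invariant vector realizes $V_q$ as a spherical module occurring in $\O_q$'') tacitly invokes Frobenius reciprocity for $\k$, which is unavailable for the same reason: there is no $U_q(\k)$ to induce from. Second, your semicontinuity argument gives $\dim V_q^\k\leqslant\dim V^\k$ only off an unspecified proper closed subset of parameters, whereas the theorem is asserted for every $q$ that is not a root of unity. The paper's count sidesteps both issues: for all such $q$, Levi flatness gives $\dim(\Pg_q^m)^\l=\lfloor m/2\rfloor+1$ with spanning set $\{C_q^lx_0^{m-2l}\}$, one has $(\Pg_q^m)^\k\subseteq(\Pg_q^m)^\l$, and Lemma \ref{delta-inv} together with $U_q(\g)$-invariance of $C_q$ shows this inclusion is an equality; comparison with the classical count in each degree then forces $\dim V_q^\k=\dim V^\k$ for every constituent.
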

\begin{proof}
  We only need to make sure that $\A^\k_q=\op_V\> {}^* V^\k\tp \!V$ is a deformation of $\C[\Sbb^{2n}]\simeq \C[G]^\k$. It is done
  by Proposition \ref{invariants} below.
\end{proof}
\noindent
Remark, that despite $\A_q^\k$ goes over to $\C[G]^\k$ at $q=1$, the fact $\A_q^\k\simeq \C[G]^\k\tp \C[q,q^{-1}]$ needs a proof
because  $\ker e_\dt$ and $\ker f_\dt$ may decrease under deformation. That is done in the next section.
\section{Quantum Euclidian plane}
To complete the proof of Theorem \ref{main_thm}, it is sufficient to check $\dim V_q^\k=\dim V^\k$ for all finite dimensional modules $V$ that appear in $\C[\Sbb^{2n}]$. They all can be realized
in the polynomial ring of the Euclidian plane $\C^{2n+1}$, \cite{Vil}. So we going to look at its quantum version.

Choose a basis $\{x_i\}_{i=-n}^n\subset \C^{N}$, $N=2n+1$, and define
a representation of $U_q(\g)$ on $\C^{N}$  by the assignment
$$
e_{\al_i}\tr x_k= \dt_{k,i-1}x_i- \dt_{k,-i}x_{-i+1}, \quad f_{\al_i}\tr x_k= \dt_{k,i}x_{i-1}- \dt_{k,-i+1}x_{-i}
$$
for $i=1, \ldots,n$. Then $x_i$ carry weights  $\ve_{i}$ subject to  $\ve_{i}=-\ve_{-i}$.
The quantum Euclidian  plane $\C_q[\C^{N}]$ is an associative algebra generated by $\{x_i\}_{i=-n}^n$ with relations
$$
x_i x_j=q^{-1}x_jx_i, \quad i>j, \quad i\not= j,j'
,\quad x_{1}x_{-1}-x_{-1}x_{1}=(q-1)x_{0}^2,\quad
$$
$$
x_{j}x_{-j}-x_{-j}x_{j}=q x_{j-1}x_{-j+1}-q^{-1}x_{-j+1}x_{j-1}, \quad j>1.
$$
These relations are equivalent to those presented in $\cite{FRT}$.

The representation on $\C^N$ extends to an action $\tr$ on $\C_q[\C^N]$ making it a $\U_q(\g)$-module algebra.
Let $\theta$ denote the involutive algebra and anti-coalgebra linear automorphism of $U_q(\g)$ determined by the assignment
$e_{\al}\to - f_{\al}$, $q^{h_\al}\to q^{-h_\al}$. Define also an anti-algebra linear involution on $\C_q[\C^N]$ by $\iota (x_i)=(x_{-i})$. They
are compatible with the action $\tr$, that is,
$\iota (u\tr x)= \theta(u)\tr \iota(x)$ for all $u\in U_q(\g)$, $x\in \C_q[\C^N]$.
\begin{lemma}
\label{delta-inv}
  For all $k\in \Z_+$ the monomials $x_0^k$ are killed by $e_{\dt}$ and $f_\dt$.
\end{lemma}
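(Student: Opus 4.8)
The plan is to reduce the two assertions to a single computation and then carry it out by treating $e_\dt$ as a composite of the elementary actions of $e_{\al_1}$ and $e_{\al_2}$, never touching the unwieldy coproduct of $e_\dt$ itself. First I would dispose of the $f_\dt$ statement by symmetry. Expanding $e_\dt=[e_{\al_1},[e_{\al_1},e_{\al_2}]_q]_{\bar q}=e_{\al_1}^2e_{\al_2}-[2]_qe_{\al_1}e_{\al_2}e_{\al_1}+e_{\al_2}e_{\al_1}^2$ and using that $\theta$ is an algebra automorphism with $\theta(e_{\al_i})=-f_{\al_i}$, one gets $\theta(e_\dt)=-f_\dt$. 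Since $\iota(x_0^k)=x_0^k$, the stated compatibility $\iota(u\tr x)=\theta(u)\tr\iota(x)$ turns any identity $e_\dt\tr x_0^k=0$ into $f_\dt\tr x_0^k=0$. So it suffices to prove $e_\dt\tr x_0^k=0$.

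For this I would exploit that $\tr$ is a \emph{left} action, so $e_\dt$ acts as the composite operator $e_{\al_1}^2e_{\al_2}-[2]_qe_{\al_1}e_{\al_2}e_{\al_1}+e_{\al_2}e_{\al_1}^2$, where each $e_{\al_i}$ acts as the twisted derivation $e_{\al_i}\tr(ab)=(e_{\al_i}\tr a)(q^{h_{\al_i}}\tr b)+a(e_{\al_i}\tr b)$ coming from $\Delta(e_{\al_i})=e_{\al_i}\tp q^{h_{\al_i}}+1\tp e_{\al_i}$. All that is then needed is the action of the simple generators on monomials in $x_0,x_1,x_2$, namely $e_{\al_1}\tr x_0=x_1$, $e_{\al_2}\tr x_0=0$, $e_{\al_2}\tr x_1=x_2$, and $e_{\al_1}\tr x_1=e_{\al_1}\tr x_2=e_{\al_2}\tr x_2=0$, together with the weight factors $q^{h_{\al_i}}\tr x_j=q^{(\al_i,\ve_j)}x_j$.

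The computation then organizes itself around one cancellation. Because $e_{\al_2}\tr x_0=0$ and $x_0$ has weight zero, an easy induction (trivial base cases $k=0,1$) gives $e_{\al_2}\tr x_0^k=0$; this annihilates the first summand $e_{\al_1}^2e_{\al_2}$ outright. A second induction, using $x_1x_0=q^{-1}x_0x_1$, yields $e_{\al_1}\tr x_0^k=\mu_k\,x_0^{k-1}x_1$ with $\mu_k=\sum_{j=0}^{k-1}q^{-j}$. Feeding this into the two remaining summands, I would find that both $e_{\al_1}e_{\al_2}e_{\al_1}$ and $e_{\al_2}e_{\al_1}^2$ produce a scalar multiple of the \emph{single} monomial $x_0^{k-2}x_1x_2$ (here the relation $x_2x_1=q^{-1}x_1x_2$ and the weight factors $q^{h_{\al_1}}\tr x_1=qx_1$, $q^{h_{\al_2}}\tr x_1=q^{-1}x_1$ enter). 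The crux is that their coefficients come out as $-(q+q^{-1})\mu_k\mu_{k-1}$ and $+(q+q^{-1})\mu_k\mu_{k-1}$, which cancel exactly, giving $e_\dt\tr x_0^k=0$.

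The main obstacle is precisely this coefficient bookkeeping: one must carefully track the $q$-powers introduced by the weight factors $q^{h_{\al_i}}\tr x_j$ and by reordering $x_1,x_2$ past $x_0$, and verify that the two surviving orderings of $e_\dt$ cancel rather than merely agree in weight (a weight count alone is insufficient, since $x_0^{k-2}x_1x_2$ does live in the correct weight space). Everything else — the $\theta$/$\iota$ reduction, the vanishing of the $e_{\al_2}$-term, and the two preliminary inductions for $e_{\al_2}\tr x_0^k$ and $e_{\al_1}\tr x_0^k$ — is routine once the twisted-derivation rules are written out.
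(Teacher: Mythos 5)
Your proposal is correct, and it is essentially the paper's own proof in mirror image: the paper computes $f_\dt\tr x_0^k=0$ directly (via the twisted-derivation action of $f_{\al_1},f_{\al_2}$ and the same cancellation of the two surviving orderings, with your $\mu_k$ appearing as its $c_k$) and then obtains the $e_\dt$ statement from the $\iota$/$\theta$ symmetry, whereas you compute $e_\dt\tr x_0^k=0$ directly and derive the $f_\dt$ statement by that same symmetry. The computations and the cancellation mechanism are identical, so this is the same approach.
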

\begin{proof}
Put  $c_k=\frac{q^{-k}-1}{q^{-1}-1}$ for $k\in \Z_+$. Since $f_{\al_2} x^k_0=0$, the equality $f_{\dt}\tr x_0^k=0$  follows from
\be
f_{\al_1}x_0^k&=&
-x_{-1} x_0^{k-1}c_k,
\nn\\
f_{\al_1}^2x_0^k&=&x_{-1}^2 x_0^{k-2}qc_{k-1}c_k,
\nn\\
f_{\al_2}f_{\al_1}^2x_0^k&=&-x_{-2} x_{-1}x_0^{k-2}c_{k-1}c_k[2]_q,
\nn\\
f_{\al_1}f_{\al_2}f_{\al_1}x_0^k
&=&-x_{-2}x_{-1} x_0^{k-2}c_{k-1}c_k.
\nn
\ee
For $e_\dt$, it is obtained by applying the involution $\iota$, as $0=\iota(e_\dt\tr x_0^k)=-f_{\dt}\tr x_0^k=0$.
\end{proof}

The q-version of the quadratic invariant is $C_q=\frac{1}{1+q}x_0^2+\sum_{i=1}^{n}q^{i-1}x_ix_{-i}\in \C_q[\C^N]$.
Let $\Pg_q^m\subset \C_q[\C^N]$ denote the vector space of polynomials of degree $m$, and $\Hg_q^{m}$ the irreducible submodule of  harmonic polynomials of degree $m$.
Then
$$
\C_q[\C^N]=\op_{m=0}^\infty \Pg_q^m, \quad \Pg_q^m=\op_{l=0}^{[\frac{m}{2}]} C^l_q\Hg_q^{m-2l}.
$$
Let $\Pg^m$ and $\Hg^m$ denote their classical counterparts.
\begin{propn}
\label{invariants}
  For any finite dimensional $U_q(\g)$-module $V_q$, $\dim V_q^\k$ is equal to $\dim V^\k$
  of the classical $\k$-invariants.
\end{propn}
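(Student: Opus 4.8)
The plan is to reduce the statement to irreducible modules and then bound $\dim V_q^\k$ from above and below separately. For generic $q$ a finite dimensional $U_q(\g)$-module $V_q$ decomposes into irreducibles with the same multiplicities as its classical counterpart $V$, so it suffices to treat irreducible $V$. For the upper bound I would invoke semicontinuity: inside $V_q$ the subspace $V_q^\k$ is cut out by a family of operators — the annihilators realizing $U_q(\l)$-invariance together with $e_\dt$ and $f_\dt$ — whose matrix entries are Laurent polynomials in $q$. Stacking them into one operator $M(q)$, the rank $\rk M(q)$ is lower semicontinuous, hence $\dim\ker M(q)=\dim V-\rk M(q)$ is upper semicontinuous and its generic value cannot exceed the value at $q=1$. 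Thus $\dim V_q^\k\le\dim V^\k$. In particular every irreducible $V$ that is not spherical ($V^\k=0$ classically) already satisfies $\dim V_q^\k=0$, which settles all irreducibles not occurring in $\C[\Sbb^{2n}]$.

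It remains to supply the matching lower bound for the spherical modules, which by Cartan--Helgason for the rank-one symmetric space $\Sbb^{2n}$ are exactly the harmonic modules $\Hg^m$ with $\dim(\Hg^m)^\k=1$, realized quantumly as $\Hg_q^m\subset\C_q[\C^N]$. Rather than work in a single $\Hg_q^m$, I would work in the whole homogeneous component $\Pg_q^m=\op_{l=0}^{[\frac{m}{2}]}C_q^l\Hg_q^{m-2l}$ and exhibit the $[\frac{m}{2}]+1$ vectors
\[
v_l=C_q^l\,x_0^{\,m-2l},\qquad 0\le l\le[\tfrac{m}{2}].
\]
Each $v_l$ is $\k$-invariant: $C_q$ is $U_q(\g)$-invariant by construction; $x_0$ is annihilated by $e_{\al_i},f_{\al_i}$ for $i\ge2$ and has weight zero, so $x_0^{m-2l}$ is $U_q(\l)$-invariant because the invariants of a module algebra form a subalgebra; and $e_\dt,f_\dt$ kill $x_0^{m-2l}$ by Lemma \ref{delta-inv}. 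Since products of invariants are again invariant, $v_l\in(\Pg_q^m)^\k$.

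The crux is linear independence of the $v_l$. Expanding $x_0^{m-2l}=\sum_{j\ge0}C_q^j h^{(l)}_j$ along the decomposition, with $h^{(l)}_j\in\Hg_q^{m-2l-2j}$, gives $v_l=\sum_j C_q^{l+j}h^{(l)}_j$, whose lowest $C_q$-order term lies in $C_q^l\Hg_q^{m-2l}$; as these summands are distinct for distinct $l$, independence follows once each leading term $h^{(l)}_0$, the harmonic projection of $x_0^{m-2l}$, is nonzero. Granting this, $\dim(\Pg_q^m)^\k\ge[\frac{m}{2}]+1$, while the upper bound gives $\dim(\Pg_q^m)^\k=\sum_l\dim(\Hg_q^{m-2l})^\k\le[\frac{m}{2}]+1$ with each of the $[\frac{m}{2}]+1$ summands $\le1$. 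Comparing forces $\dim(\Hg_q^m)^\k=1$ for every $m$, which together with the vanishing of the non-spherical contributions yields $\dim V_q^\k=\dim V^\k$ in general.

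The main obstacle is precisely the nonvanishing of the harmonic projection of $x_0^k$ for generic $q$. Using the given splitting $\Pg_q^k=\Hg_q^k\op C_q\Pg_q^{k-2}$, this projection vanishes iff $x_0^k=C_q\,p$ for some $p\in\Pg_q^{k-2}$. Such a relation is polynomial in $q^{\pm1}$, and since the deformation is flat (the graded components have $q$-independent dimension) it would specialize at $q=1$ to $x_0^k=C\,p_1$, forcing the classical function $x_0^k$ to be divisible by the quadratic form $C$ — which is false, as $x_0^k$ does not vanish on the quadric $\{C=0\}$. Hence no such $p$ exists and the harmonic part of $x_0^k$ survives for generic $q$, completing the argument.
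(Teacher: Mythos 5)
Your construction is, at its core, the same as the paper's: you realize the spherical modules inside the quantum Euclidean plane $\C_q[\C^N]$, take the vectors $C_q^l x_0^{m-2l}$, and use Lemma \ref{delta-inv} together with the full $U_q(\g)$-invariance of $C_q$ to place them in $(\Pg_q^m)^\k$. Where you diverge from the paper is in how the dimension count is closed. The paper does it with no genericity argument at all: the space of $U_q(\l)$-invariants in $\Pg_q^m$ has dimension $[\frac{m}{2}]+1$ at \emph{every} non-root-of-unity $q$, because $U_q(\l)\simeq U_q(\g\l(n))$ is an honest Hopf subalgebra whose finite-dimensional representation theory (complete reducibility, characters, hence branching and invariant multiplicities) coincides with the classical one for all such $q$; since by Lemma \ref{delta-inv} the spanning invariants $C_q^lx_0^{m-2l}$ are killed by $e_\dt$ and $f_\dt$, one gets the exact equality $(\Pg_q^m)^\k=(\Pg_q^m)^\l$, which yields your lower and upper bounds simultaneously.

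Your two substitutes for that step are where the genuine gap lies: both are valid only for \emph{generic} $q$, not for an arbitrary fixed $q$ that is not a root of unity, which is the paper's standing assumption and what Theorem \ref{main_thm} requires. First, lower semicontinuity of $\rk M(q)$ bounds $\dim\ker M(q)$ by its value at $q=1$ only for $q$ outside the zero locus of certain minors --- an unspecified finite set that need not consist of roots of unity and that depends on the module $V$; since infinitely many irreducibles enter $\A_q^\k$, at a fixed numerical $q$ you cannot guarantee the bound for all of them at once (only a transcendental $q$ avoids all the exceptional sets uniformly). Second, the nonvanishing of the harmonic projection of $x_0^k$ suffers the same defect: at a fixed $q_0\in\C$ a relation $x_0^k=C_{q_0}\,p$ is a statement about complex coefficients and cannot be ``specialized at $q=1$''; your argument works over the field $\C(q)$, not pointwise. (That particular step can be repaired without genericity: in the PBW monomial basis of $\C_q[\C^N]$ the element $C_q^lx_0^{m-2l}$ contains $x_n^lx_{-n}^lx_0^{m-2l}$ with coefficient $q^{(n-1)l}\neq0$, while reordering never increases the number of $x_{\pm n}$-factors, so the transition matrix between the $v_l$ and ordered monomials is triangular and the $v_l$ are independent for all relevant $q$.) On the positive side, you explicitly address the vanishing of $V_q^\k$ for non-spherical irreducibles, which the paper leaves implicit in its reduction to modules occurring in $\C[\Sbb^{2n}]$; but to make that vanishing hold at every non-root-of-unity $q$ you would again need an argument of the paper's type --- comparison with the deformation-stable space of $U_q(\l)$-invariants --- rather than semicontinuity.
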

\begin{proof}
It is sufficient to show that  $\dim (\Pg^m_q)^\k=\dim (\Pg^m)^\k$.
  In the classical limit, the trivial $\k$-submodule   in $\Hg^{m-2l}$ is multiplicity free, so its dimension
 in $\Pg^m$ is $[\frac{m}{2}]+1$.
On the other hand, the subspace of $U_q(\l)$-invariants is spanned by $\{C^l_qx_0^{m-2l}\}_{l=0}^{[\frac{m}{2}]}$ and has
the same dimension.
Since all $U_q(\l)$-invariants are killed by $e_{\dt}, f_\dt$, in view of  Lemma \ref{delta-inv}, this proves
the statement.
\end{proof}
\appendix
\section{}
\begin{lemma}
\label{lemma_XYZ}
Suppose $x,y,z$ satisfy the relations
$$
[y,[y,x]_q]_{\bar q}=0,\quad [y,[y,z]_q]_{\bar q}=0,\quad [x,z]=0.
$$
Then $[[x,y]_{\bar q},[y,z]_q]=0,\quad[y,[x,[y,z]_q]_{q}]=0.$
\end{lemma}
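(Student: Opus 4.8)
The plan is to treat the three hypotheses as rewriting rules in the free associative algebra on $x,y,z$ and to reduce both target identities to a single elementary commutator. First I would abbreviate $a=[2]_q=q+q^{-1}$ and restate the hypotheses as the quadratic relations
$$
y^2x+xy^2=a\,yxy,\qquad y^2z+zy^2=a\,yzy,\qquad xz=zx,
$$
noting that $a\neq0$ since $q$ is not a root of unity. Both claims are commutators of words of length four, so after expanding the nested $q$-brackets each reduces to a $\C$-linear combination of monomials of degree $(1,2,1)$ in $(x,y,z)$, and the whole problem becomes showing that two such combinations vanish.

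The key step, which I expect to carry all the weight, is the pair of auxiliary identities
$$
a\,(yxyz-xyzy)=[y^2,zx]=a\,(yzyx-zyxy).
$$
The left one follows by multiplying the first quadratic relation on the right by $z$ and the second on the left by $x$ and subtracting, using $xz=zx$ to merge $y^2xz$ and $xzy^2$ into $y^2zx$ and $zxy^2$; the right one is the mirror image (second relation times $x$ on the right, first times $z$ on the left). The content is that the two superficially different middle-$y$ words collapse to the same commutator $[y^2,zx]$, and this is the only place where the Serre relations and the commutativity $xz=zx$ genuinely interact.

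With this in hand the assembly is routine. For the first identity I would expand $[[x,y]_{\bar q},[y,z]_q]$, use $xz=zx$ to cancel the two $yzxy$ terms, substitute the quadratic relations into $xy^2z-zy^2x$, and collect the scalars (here $a-\bar q=q$) to reach $q\bigl[y,\,xyz-zyx\bigr]=q\bigl((yxyz-xyzy)-(yzyx-zyxy)\bigr)$, which vanishes by the key step. For the second identity I would expand $[y,[x,[y,z]_q]_q]$, again cancel the common $yzxy$ term, and reduce to $(yxyz-xyzy)+q^2(yzyx-zyxy)-q\,[y^2,zx]$; multiplying by $a$ and invoking the key step turns this into $(1+q^2-qa)\,[y^2,zx]$, and $qa=q^2+1$ forces the scalar to vanish.

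The main obstacle is combinatorial rather than conceptual: the nested $q$-commutators expand into a dozen monomials apiece, and the cancellations only become visible after the right grouping, so the real decision is to route everything through $[y^2,zx]$ instead of attempting to normal-order the words. The only hypothesis needed beyond the three relations is $a\neq0$, which the standing assumption on $q$ supplies.
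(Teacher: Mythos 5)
Your proof is correct; I checked both expansions and the scalar bookkeeping ($a-\bar q=q$, $qa=q^2+1$), and your key identity $a\,(yxyz-xyzy)=[y^2,zx]=a\,(yzyx-zyxy)$ does follow from the hypotheses exactly as you describe, so both targets vanish once $a\neq 0$. However, your route is genuinely different from the paper's. The paper never expands into monomials: it works with the $q$-deformed Jacobi identity
$$
[X,[Y,Z]_a]_b=[[X,Y]_c,Z]_{\frac{ab}{c}}+c\,[Y,[X,Z]_{\frac{b}{c}}]_{\frac{a}{c}},
$$
applies it to the two vanishing expressions $[x,[y,[y,z]_q]_{\bar q}]_{\bar q^2}$ and $[z,[y,[y,x]_q]_{\bar q}]_{\bar q^2}$, observes that the resulting ``extra'' terms coincide because $[x,[y,z]_{q}]_{\bar q}=[[x,y]_{\bar q},z]_{q}=[z,[y,x]_q]_{\bar q}$ (this is the only place $[x,z]=0$ enters), and concludes $(1+q^{-2})\,[[x,y]_{\bar q},[y,z]_q]=0$; the second claim then falls out of the first by one further application of the same Jacobi identity, rather than by a separate computation as in your argument. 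Both proofs need the same nondegeneracy condition: your $a=q+q^{-1}\neq 0$ is precisely the paper's $1+q^{-2}\neq 0$, and both are supplied by $q$ not being a root of unity. What the paper's approach buys is economy and a reusable tool---the Jacobi identity absorbs all the bookkeeping and makes the second identity a one-line corollary of the first; what your approach buys is that it is entirely elementary and mechanically verifiable in the free algebra, and it isolates the actual content of the lemma in one transparent fact, namely that both middle-$y$ differences collapse to the single commutator $[y^2,zx]$.
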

\begin{proof}
The proof is based on the "Jacobi identity"
$$
[X,[Y,Z]_a]_b=[[X,Y]_c,Z]_{\frac{ab}{c}}+c[Y,[X,Z]_{\frac{b}{c}}]_{\frac{a}{c}}, \quad
$$
which holds true for all elements $X,Y,Z$ of an associative algebra and scalars $a,b,c$ with invertible $c$.
Apply it to the equalities
$$0=[x,[y,[y,z]_q]_{\bar q}]_{\bar q^2}=[z,[y,[y,x]_q]_{\bar q}]_{\bar q^2}=0$$
with $a=\bar q$, $b=\bar q^2$,  $c=\bar q$, and rewrite them    as
$$
0=[[x,y]_{\bar q},[y,z]_q]_{\bar q^2}+\bar q[y,[x,[y,z]_{q}]_{\bar q}]_{}
=
[[z,y]_{\bar q},[y,x]_q]_{\bar q^2}+\bar q[y,[z,[y,x]_q]_{\bar q}]_{}=0.
$$
Observe that  the second terms cancel due to $[x,[y,z]_{q}]_{\bar q}=[[x,y]_{\bar q},z]_{q}=[z,[y,x]_q]_{\bar q}$. Then
$$
[[x,y]_{\bar q},[y,z]_q]_{\bar q^2}
=
[[z,y]_{\bar q},[y,x]_q]_{\bar q^2}=[[y,z]_{q},[x,y]_{\bar q}]_{\bar q^2}.
$$
This yields $(1+q^{-2})[[y,z]_{q},[x,y]_{\bar q}]=0$, which proves the first formula.
Using the "Jacobi identity" with $X=x$, $Y=y$, $Z=[y,z]_q$,  $a=\bar q$, $b=1$, and $c=\bar q$,  we get
$$
0=q[x,[y,[y,z]_q]_{\bar q}]_{}=q[[x,y]_{\bar q},[y,z]_q]+[y,[x,[y,z]_{q}]_{q}]_{}=[y,[x,[y,z]_{q}]_{q}]_{},
$$
which proves the second formula.
\end{proof}

\end{document}